\newtheorem{thm}{Theorem}
\newtheorem{lemma}[thm]{Lemma}
\newtheorem{prop}[thm]{Proposition}
\theoremstyle{remark}
\newtheorem{defn}[thm]{Definition}
\newtheorem{remark}[thm]{Remark}
\newcommand{\from}{\colon}
\newcommand{\cT}{\mathcal{T}}
\DeclareMathOperator{\diam}{diam}
\DeclareMathOperator{\Lip}{Lip}
\DeclareMathOperator{\Lie}{Lie}
\DeclareMathOperator{\Hom}{Hom}
\DeclareMathOperator{\GL}{GL}
\DeclareMathOperator{\Cone}{Cone}
\DeclareMathOperator{\Nil}{Nil}
\DeclareMathOperator{\opspan}{span}
\newcommand{\N}{\mathbb{N}}
\newcommand{\R}{\mathbb{R}}
\newcommand{\Z}{\mathbb{Z}}
\date{\today}
\title[Carnot rectifiability of sub-Riemannian manifolds]{Carnot rectifiability of sub-Riemannian manifolds with constant tangent}
\author{Enrico Le Donne}
\address[Le Donne]{Department of Mathematics and Statistics, P.O. Box 35,
FI-40014,
University of Jyv\"askyl\"a, Finland}
\email{ledonne@msri.org}
\author{Robert Young}
\address[Young]{Courant Institute of Mathematical Sciences\\
  New York University\\
  251 Mercer St.\\
  New York, NY  10012\\
  USA}
\email{ryoung@cims.nyu.edu}
\thanks{E.L.D. was partially supported by the Academy of Finland (grant
288501
`\emph{Geometry of subRiemannian groups}')
and by the European Research Council
 (ERC Starting Grant 713998 GeoMeG `\emph{Geometry of Metric Groups}').
R.Y.~was supported by NSF grant 1612061.}
\begin{document}
\maketitle
\begin{abstract}
  We show that if $M$ is a sub-Riemannian manifold and $N$ is a Carnot group such that the nilpotentization of $M$ at almost every point is isomorphic to $N$, then there are subsets of $N$ of positive measure that embed into $M$ by bilipschitz maps.  Furthermore, $M$ is countably $N$--rectifiable, i.e., all of $M$ except for a null set can be covered by countably many such maps.
\end{abstract}
\tableofcontents

\newpage
\section{Introduction}

Given two metric measure spaces $M=(M,\mu,d)$ and $N$, we say that $M$ is {\em countably $N$-rectifiable} if there exist countably many biLipschitz embeddings $f_n\from U_n\to M$, with $U_n\subseteq N$ measurable, such that
$$\mu \biggl(M\setminus \bigcup_{n\in \N} f_n (U_n)\biggr) = 0.$$
When $N$ is the Euclidean space $\R^k$, this is the usual notion of a rectifiable set, see \cite{Federer}.

In this paper we will consider the rectifiability of an equiregular sub-Riemannian manifold $M$, with respect to a Carnot group $N$.  These are metric measure spaces in a natural way; since $M$ is equiregular, the Hausdorff dimension of any nonempty open subset of $M$ is the same, say $Q$, and the $Q$--dimensional Hausdorff measure $\mathcal H^Q$ absolutely continuous with respect to any smooth volume form.  We will thus equip all equiregular sub-Riemannian manifolds with Hausdorff measures of appropriate dimension.  We will prove the following theorem relating rectifiability to the nilpotentizations of $M$.
\begin{thm}\label{thm:Teorema_uno}
  Let $M$ be an equiregular sub-Riemannian manifold and let $G$ be a Carnot group.  Then $M$ is countably $G$-rectifiable if and only if for almost every point $p\in M$, the nilpotentization $\Nil(M,p)$ is isomorphic to $G$ as a Lie group.
\end{thm}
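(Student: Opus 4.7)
My plan is to prove the two implications of the theorem separately.

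\textbf{The forward direction} is done by differentiation. If $f_n\from U_n\subseteq G\to M$ are the bilipschitz charts exhausting $M$, then at a density point $p\in U_n$ I would blow up $f_n$: by the self-similarity of $G$ (its metric tangent at any point is $G$ itself) and by Mitchell's theorem ($\Nil(M,f_n(p))$ is the metric tangent of $M$ at $f_n(p)$), the suitably conjugated rescalings of $f_n$ are uniformly bilipschitz maps from a fixed ball of $G$ into rescalings of $M$; along a subsequence they converge (Arzel\`a--Ascoli) to a bilipschitz map $G\to \Nil(M,f_n(p))$. A Pansu-type rigidity then forces the limit to intertwine the intrinsic dilations and hence to be a Carnot group isomorphism, so $\Nil(M,f_n(p))\cong G$ for a.e.\ $p\in U_n$. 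Ranging over $n$ covers almost all of $M$.

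\textbf{The reverse direction} is the substantive one. Fix a smooth frame on $M$ adapted to the horizontal filtration, and for each $q\in M$ build a privileged-coordinate map $\phi_q\from \Nil(M,q)\to M$, e.g.\ via exponential coordinates of the second kind in this frame. By Bella\"iche's estimates and Mitchell's theorem, at a.e.\ $q\in M$ the restriction $\phi_q|_{B_{\Nil(M,q)}(0,r)}$ is $(1+o_r(1))$-bilipschitz as $r\to 0$. Under the hypothesis $\Nil(M,q)\cong G$ a.e., I would select a measurable family of Carnot isomorphisms $\iota_q\from G\to \Nil(M,q)$ and set $\Phi_q:=\phi_q\circ \iota_q\from B_G(0,r_q)\to M$; its bilipschitz distortion tends to $1$ as $r_q\to 0$.

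The crucial quantitative step is to upgrade this pointwise statement to uniform control on a set of positive measure. Applying Egorov's theorem to the scale-$r$ bilipschitz distortion of $\Phi_q$ yields, for any prescribed $\epsilon>0$, a compact set $K\subseteq M$ of positive measure and a radius $r_0>0$ such that $\Phi_q|_{B_G(0,r_0)}$ is $(1+\epsilon)$-bilipschitz for every $q\in K$. At a density point $q_0$ of $K$, the restriction of $\Phi_{q_0}$ to $\Phi_{q_0}^{-1}(K)\cap B_G(0,r_0)$ is then a bilipschitz embedding of a positive-measure subset of $G$ into $M$, which establishes the positive-measure embedding claim of the abstract. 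A standard Vitali-type exhaustion, iterated inside $K$ and then across a countable family of such compacta (using inner regularity of $\mathcal H^Q$ together with the a.e.\ hypothesis), upgrades this to full countable $G$-rectifiability.

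\textbf{Main obstacle.} The hardest step is the uniformization just described: converting the pointwise statement ``the distortion of $\Phi_q$ at scale $r$ tends to $1$ for a.e.\ $q$ as $r\to 0$'' into a uniform-in-$q$ statement on a positive-measure set. This requires the frame, the privileged coordinates, and the measurable choice of isomorphisms $\iota_q$ to be sufficiently compatible that the distortion is a genuinely measurable function on $M$ and Egorov applies, and it requires Bella\"iche's nilpotent-approximation estimates to be quantitative in the adapted frame. Once this uniformity is obtained, the density-point and covering arguments that produce the bilipschitz pieces are classical.
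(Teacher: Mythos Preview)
Your forward direction is essentially the paper's own argument: pass to tangents at density points to obtain a bilipschitz map $G\to\Cone(M,p)$ and invoke Pansu's differentiation to force a Lie group isomorphism.

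The reverse direction, however, contains a genuine gap, and it is not the uniformization step you flag as the main obstacle. The problem is earlier: your pointwise claim that $\phi_q|_{B_{\Nil(M,q)}(0,r)}$ is $(1+o_r(1))$--bilipschitz is false in general. The paper warns against exactly this in the introduction: there are sub-Riemannian nilpotent Lie groups $G$ and $H$ whose nilpotentizations are isomorphic at \emph{every} point, yet no open subset of one is bilipschitz to any open subset of the other \cite{LOW}. In such a $G$ the privileged-coordinate map from the common tangent cannot be bilipschitz on any ball, at any point. What Bella\"iche's estimates actually deliver (this is the paper's Lemma~\ref{lem:transfer lemma}) is
\[
\bigl|d_M(\gamma_1(1),\gamma_2(1))-d_G(\lambda_1(1),\lambda_2(1))\bigr|\le C\bigl(\|u_1\|_1+\|u_2\|_1\bigr)^{1+\frac{1}{s}},
\]
an error governed by the distance of the two points \emph{from the center $q$}, not by their mutual distance. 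When two points lie at distance $\sim r$ from $q$ but at distance $\ll r^{1+1/s}$ from each other, the error term dominates and no bilipschitz bound follows. Your Egorov step is therefore trying to make uniform a distortion function that is already infinite.

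The paper's fix is to give up on open balls entirely. It builds a Cantor set $K\subset G$ by removing from each scale-$2^{-k}$ Christ cube a boundary layer of relative width $\tau 2^{-k/(2s)}$; this forces any two points $x,y\in K$ whose smallest common cube has scale $2^{-i}$ to satisfy $d_G(x,y)\gtrsim 2^{-i(1+1/(2s))}$. The map into $M$ is then built by transporting controls along a tree of horizontal curves, and on $K$ the Bella\"iche error $\lesssim (r2^{-i})^{1+1/s}$ is beaten by the enforced separation, giving a genuine bilipschitz embedding whose image has positive density at the chosen point. A secondary issue: the transfer estimate above requires a \emph{smooth} frame $\mathbb{X}$ adapted to the isomorphisms $a_q$, so a merely measurable choice of $\iota_q$ is not enough; the paper produces smooth isomorphisms by exhibiting the isomorphism class as a $\GL$--orbit and applying the implicit function theorem (Lemmas~\ref{lem:space of forms} and~\ref{lem:isomorphic frames}), after first reducing to the open set where $\Nil(M,p)\cong G$ via Proposition~\ref{prop:locally_closed}.
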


For every sub-Riemannian manifold $M$ and every $p\in M$, the nilpotentization $\Nil(M,p)$ of $M$ at $p$ is an invariant related to the tangent cone that encodes the infinitesimal structure of $M$.  Let $\Cone(M,p)$ be the Gromov--Hausdorff tangent of $M$ at $p$, also known as the {\em metric tangent} or \emph{tangent cone}.  Mitchell and Bella\"iche showed that $\Cone(M,p)$ always exists and is a sub-Riemannian manifold and gave a way to calculate $\Cone(M,p)$ by constructing \emph{nilpotent approximations} of a frame of the horizontal distribution of $M$.  The graded Lie algebra generated by these approximations turns out to depend only on $p$ and the horizontal distribution of $M$, and we call it the \emph{nilpotentization} of $M$ at $p$, denoted $\mathfrak{nil}(M,p)$.  (Details of this construction can be found in \cite{bellaiche}, \cite{jeancontrol}, and we will give a sketch in Section~\ref{sec:privileged coords}.)  When $p$ is a regular point of $M$, the dimension of $\mathfrak{nil}(M,p)$ is equal to the topological dimension of $M$ and $\Cone(M,p)$ is isometric to the stratified Lie group $\Nil(M,p)$ with Lie algebra $\mathfrak{nil}(M,p)$.  (See \cite{LeDonne:Carnot} for an introduction to stratified groups and Carnot groups.)

The fact that countable $G$--rectifiability implies that the nilpotentization $\Nil(M,p)$ is almost everywhere isomorphic to $G$ follows from Pansu differentiability.
Indeed, if $U\subset G$ is a subset of positive measure and $f\from U\to M$ is bilipschitz, then $f$ induces bilipschitz maps from $G$  to $\Cone(M,p)$ for a generic $p$.
By work of Pansu, the existence of such a map implies that $\Nil(M,p)$ is isomorphic to $G$ as a Lie group; see Section \ref{sec:proof of main2} for more details.

The main result of this paper is to show that $M$ is countably $G$--rectifiable if $\Nil(M,p)\cong G$ for almost every $p\in M$, where the symbol $\cong$ denotes Lie group isomorphism.
We remark that even if $M$ is equiregular, the nilpotentization $\Nil(M,p)$ may be isomorphic to $G$ almost everywhere but not everywhere, see Example~\ref{sec:Example2}.  To avoid this problem, we prove the following proposition.
\begin{prop}\label{prop:locally_closed_intro}
  Let $M$ be an equiregular sub-Riemannian manifold and let $G$ be a  Lie group.  Let $X=\{p\in M\mid \Nil(M,p)\cong G\}$.  Then $X$ is locally closed.
\end{prop}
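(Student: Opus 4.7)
The plan is to show that in a neighborhood $U$ of any $p_0\in M$, the Lie algebra $\mathfrak{nil}(M,q)$ can be encoded as a continuous function of $q$ valued in an algebraic variety of Lie algebra structures; local closedness of $X$ then follows from the fact that orbits of algebraic group actions on algebraic varieties are locally closed.

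More concretely, fix $p_0\in M$ and use equiregularity to pick a small neighborhood $U$ on which there is a smooth frame of $TM$ adapted to the flag $\Delta^1\subset \Delta^2\subset\cdots$ generated by the horizontal distribution, together with a smoothly varying family of privileged coordinates centered at each $q\in U$. Following the nilpotent-approximation construction of Bella\"iche and Jean (recalled in Section~\ref{sec:privileged coords}), these ingredients produce, at each $q\in U$, structure constants $c^k_{ij}(q)\in\R$ for $\mathfrak{nil}(M,q)$ on the fixed vector space $\R^n$, where $n=\dim M$. Tracing through the construction, the $c^k_{ij}$ are smooth functions of $q$, and they assemble into a continuous map $\Phi\from U \to \mathcal{L}$, where $\mathcal{L}\subset \R^{n^3}$ is the real affine variety of Lie algebra structures on $\R^n$ (cut out by antisymmetry and the Jacobi identity).

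Since nilpotentizations are simply connected nilpotent Lie groups, the condition $\Nil(M,q)\cong G$ is equivalent to $\mathfrak{nil}(M,q)\cong\Lie(G)$ as Lie algebras. If $\dim G\neq n$ then $X\cap U=\emptyset$ and we are done, so assume $\dim G=n$. Then the set $O\subset\mathcal{L}$ of Lie algebra structures isomorphic to $\Lie(G)$ is exactly the orbit of $\Lie(G)$ under the change-of-basis action of the algebraic group $\GL_n(\R)$ on $\mathcal{L}$. By a classical theorem (Chevalley), any orbit of an algebraic group acting on an algebraic variety is open in its closure, and in particular locally closed; hence $X\cap U=\Phi^{-1}(O)$ is the preimage of a locally closed set under a continuous map, and so is locally closed in $U$. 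Letting $p_0$ vary over $M$ gives the proposition.

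The main obstacle I anticipate is establishing the regularity of $\Phi$: after choosing a smooth adapted frame on $U$, one must verify that each step in the nilpotent-approximation procedure (the privileged coordinates, the homogeneous components of the frame vector fields, and the resulting structure constants) depends continuously on the base point. This is a natural expectation but requires carefully inspecting the construction of Bella\"iche--Jean ``in families.'' Once that regularity is in hand, the algebraic-geometric conclusion is routine.
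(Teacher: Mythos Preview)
Your proposal is correct and is essentially the same as the paper's argument: the paper likewise builds, on a neighborhood $U$ of a point, a smoothly varying family of privileged coordinates (citing \cite[2.2.2]{jeancontrol}) and an adapted frame, obtains a continuous map $\psi\from U\to F_{\R^n}=\Hom(\R^n\wedge\R^n,\R^n)$ recording the structure constants of $\mathfrak{nil}(M,q)$, and then observes that $X\cap U=\psi^{-1}(E_{\mathfrak{g}})$ where $E_{\mathfrak{g}}$, being a $\GL_n(\R)$--orbit under a polynomial action, is locally closed. The regularity of $\Phi$ that you flagged as the main obstacle is exactly what the paper dispatches by invoking the smooth-in-families privileged coordinates from Jean's book.
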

In particular, if $\Nil(M,p)\cong G$ almost everywhere, then $\Nil(M,p)\cong G$ on an open subset of $M$ whose complement is a null set.  By restricting to this subset, we may suppose that $\Nil(M,p)\cong G$ for any $p\in M$.

In order to prove that $M$ is countably $G$--rectifiable, we construct a family of biLipschitz maps $f_n\from U_n\to M$, where each $U_n$ is a measurable subset of $G\cong \Cone(M)$.  One difficulty is that in general, there may not be any biLipschitz maps between an open subset of $\Cone(M,p)$ and an open subset of $M$.  The first examples of this were manifolds such that $\Nil(M,p)$ is not constant on any set of positive measure \cite{Varchenko}; we will give a 7--dimensional example in Section \ref{sec:Example1}.  This can also happen when $\Nil(M,p)$ is constant; in \cite{LOW} the authors showed that there are sub-Riemannian nilpotent groups $G$ and $H$ with isomorphic tangent cones that are not locally biLipschitz equivalent.  Therefore, any biLipschitz embedding of a subset $U\subset G$ into $H$ has nowhere dense image.  We thus prove Theorem~\ref{thm:Teorema_uno} by showing that if all tangents of $M$ are isomorphic to $G$, then there is a Cantor set $K\subset G$ of positive measure and a countable collection of biLipschitz embeddings from $K$ to $M$ whose images covers almost all of $M$.  As a corollary, if $G$ and $H$ are sub-Riemannian groups with isomorphic tangent cones, then there are positive-measure subsets $U\subset G$ and $V\subset H$ that are bilipschitz equivalent.

\subsection{Outline of paper}
In Section~\ref{sec:privileged coords}, we recall some results of Bellaïche and Jean on privileged coordinates, nilpotent approximations, and nilpotentizations.   In Section~\ref{sec:constant tangent}, we use these results to approximate distances in a neighborhood of a point $p\in M$ in terms of  distances in $\Cone(M,p)$.  Section~\ref{sec:constant tangent} also contains the proof of Proposition~\ref{prop:locally_closed_intro}.  Section~\ref{sec:bilip maps} contains the proof of Theorem~\ref{thm:Teorema_uno}.  We first use a set of Christ cubes for $G$ to construct the Cantor set $K\subset G$, then 
for each $p\in M$ we
construct biLipschitz embeddings $H_p\from K\to M$ such that   the image $H_p(K)$ has positive density at $p$. We devote Section~\ref{sec:proof of main}   to the proof of Theorem~\ref{thm:Teorema_uno}: In Section~\ref{sec:proof of main1}, we use these maps to prove that $M$ is countably $G$-rectifiable if $G$ is the tangent almost everywhere; while in Section~\ref{sec:proof of main2}, we show that the tangent almost everywhere $G$ if $M$ is countably $G$-rectifiable.

 Finally, in Section~\ref{sec:Example}, we give two examples: an example of an equiregular sub-Riemannian manifold on which the tangent is not constant on any set of positive measure and an example of an equiregular sub-Riemannian manifold on which the tangent is constant almost everywhere but not everywhere.
 
\section{Preliminaries: privileged coordinates, tangent cones, and Bella\"iche's estimates}\label{sec:privileged coords}
Let $M$ be a connected manifold of dimension $n$, let $\Delta\subset TM$ be a sub-bundle of the tangent bundle, and let $g$ be a positive-definite quadratic form defined on $\Delta$.  For each $p$, let $\Delta_1(p)\subset \Delta_2(p)\subset \dots\subset T_pM$ be the subspaces spanned by iterated brackets, so that $\Delta_1(p)=\Delta(p)$ and $\Delta_i(p)$ is spanned by vectors of the form $[X_1,\dots,[X_{k-1},X_k]\dots](p)$, where $1\le k\le i$ and
$X_i$ are vector fields tangent to $\Delta$.  Let $\Delta_i$ be the corresponding ``bundle.''  If $p\in M$ and there is a neighborhood $U$ of $p$ such that $\dim \Delta_i(q)$ is constant on $U$ for each $i$, we say that $p$ is a \emph{regular point}.  If every point is regular, we say that $M$ is equiregular; in this case, the $\Delta_i$'s are sub-bundles of $TM$.

If there is an $i$ such that $TM=\Delta_i$ for some $i$, we say that $\Delta$ is \emph{bracket-generating} and call the triple $M=(M,\Delta,g)$ a \emph{sub-Riemannian manifold} with \emph{rank} $d$ where $d$ is the dimension of the fiber $\Delta(p)$ of the bundle $\Delta$, i.e., $d:=\dim \Delta(p)$ for any $p\in M$.  We define the \emph{step} of $M$ to be the minimal $i$ such that $TM=\Delta_i$.  We can use the quadratic form $g$ to equip $M$ with the path metric  called the \emph{Carnot--Carathéodory metric} or the \emph{sub-Riemannian metric} on $M$.

If $\dim \Delta_i(p)$ is independent of $p$ for all $i$, we say that $M$ is \emph{equiregular} and define $n_i:=\dim \Delta_i(p)$.  By convention, we define $n_0:=0$.  Then the Hausdorff dimension of $M$, with respect to the sub-Riemannian metric, is $Q:=\sum_i i(n_i-n_{i-1})$.  The $Q$--dimensional Hausdorff measure $\mathcal H^Q$ is doubling, Ahlfors regular, and it is absolutely continuous with respect to any smooth volume form.  In particular, a set is null with respect to a Riemannian metric on $M$ if and only if it is null with respect to $\mathcal H^Q$.  

Mitchell and Bellaïche associated a nilpotent Lie algebra, called the tangent Lie algebra, to each point of an equiregular manifold $M$.  Let $p\in M$ and let $\mathbb X=(X_1, \ldots, X_d)$ be a local frame for $\Delta$ defined on a neighborhood $U$ of $p$.  There are a class of coordinate systems for $M$, called {\em privileged coordinate systems}, which induce a grading of the differential operators on $M$; we refer to \cite{bellaiche, jeancontrol} for definitions.  Let $\phi\from U\to \R^n$ be a coordinate system that is privileged at $p$.  The corresponding grading decomposes smooth functions and vector fields on $U$ into functions and fields that are homogeneous with respect to a scaling associated to $\phi$.  The grading respects the bracket operation and if $V$ is a homogeneous vector field, then the coefficients of $\phi_*(V)$ are polynomials.  Each field $X_1,\dots, X_d$ has weight $-1$, and we use the grading to write the $X_i$ as a sum of homogeneous vector fields
$$X_i=X_i^{(-1)}+X_i^{(0)}+\dots$$
such that $X_i(p)=X_i^{(-1)}(p)$.  
We define the \emph{nilpotent approximation} of $X_i$ with respect to $\phi$ as $\hat{X}_i^{\phi,p}:=X_i^{(-1)}$.  We write $\hat{\mathbb{X}}^{\phi, p}=(\hat{X}^{\phi, p}_1,\dots, \hat{X}^{\phi, p}_d)$.

The nilpotent approximations of the $X_i$ generate a Lie algebra $\Lie(\hat{\mathbb{X}}^{\phi, p}).$
This can be equipped with the stratification
$$\Lie(\hat{\mathbb{X}}^{\phi, p})=V_1(\Lie(\hat{\mathbb{X}}^{\phi, p})) \oplus V_2(\Lie(\hat{\mathbb{X}}^{\phi, p})) \oplus \dots,$$
where
$$V_i(\Lie(\hat{\mathbb{X}}^{\phi, p})) = \opspan \{ [\hat{X}_{a_1}^{\phi,p} ,\dots,[\hat{X}_{a_{i-1}}^{\phi,p},\hat{X}_{a_i}^{\phi,p}]\dots] \mid a_j\in \{1,\dots, d\}\}.$$
Bellaïche showed that when $p$ is a regular point, the map $V\mapsto V(p)$ is a linear isomorphism from $\Lie(\hat{\mathbb{X}}^{\phi, p})$ to $T_pM$ \cite[5.21]{bellaiche}.  As in \cite[Section 4]{bellaiche}, we can construct a basis for $\Lie(\hat{\mathbb{X}}^{\phi, p})$ by extending the $X_i$'s to an adapted frame $(Y_1,\dots, Y_n)$ (possibly defined on a smaller neighborhood) such that $X_i=Y_i$ for $i=1,\dots, d$ and each $Y_i$ is a $w_i$--iterated bracket of the $X_i$'s.  Each vector field $Y_i$ has weight $-w_i$ and can be decomposed as a sum of homogeneous vector fields
$$Y_i=Y_i^{(-w_i)}+Y_i^{(-w_i+1)}+\dots.$$
The nilpotent approximations of the $Y_i$, defined as $\hat{Y}_i^{\phi,p}:=Y_i^{(-w_i)}$, form a basis of $\Lie(\hat{\mathbb{X}}^{\phi, p})$.

This Lie algebra depends \emph{a priori} on the choice of $\phi$ and $\mathbb X$, but  if $p$ is a regular point and $\mathbb Z$ is another frame for $\Delta$, then the fields $\hat{Z}^{\phi,p}_i$ are linear combinations of the $\hat{X}^{\phi,p}_i$'s, so
$$\Lie(\hat{\mathbb{X}}^{\phi, p})=\Lie(\hat{\mathbb{Z}}^{\phi, p}).$$
Furthermore, if $\phi'$ is another privileged coordinate system, then by Proposition~5.20 of \cite{bellaiche}, there is a canonical isomorphism
$$\iota_{\phi,\phi'}\from \Lie(\hat{\mathbb{X}}^{\phi, p})\to \Lie(\hat{\mathbb{X}}^{\phi', p})$$
such that
$$\iota_{\phi,\phi'}(\hat{\mathbb{X}}^{\phi, p})=\hat{\mathbb{X}}^{\phi', p}.$$
That is, $\iota_{\phi,\phi'}$ is the unique isomorphism such that $\iota_{\phi,\phi'}(V)(p)=V(p)$ for every vector field $V\in V_1(\Lie(\hat{\mathbb{X}}^{\phi, p}))$, 
We can thus define the \emph{tangent Lie algebra}, also called the \emph{symbol}, of $M$ at $p$ by
$$\mathfrak{nil}(M,p) := \Lie(\hat{\mathbb{X}}^{\phi, p}).$$
This depends on $\phi$, but we suppress $\phi$ in the notation because different choices of $\phi$ lead to canonically isomorphic Lie algebras.  Let $\Nil(M,p)$ be the simply connected stratified Lie group with Lie algebra $\mathfrak{nil}(M,p)$.  We call this the {\em nilpotentization} of $M$ at $p$.

Though $\dim \mathfrak{nil}(M,p) = \dim M = n$, there is no canonical map from $T_pM$ to $\mathfrak{nil}(M,p)$.
Regardless, the arguments above show that if $\tau_{\phi,p}\from \Delta(p) \to \Lie(\hat{\mathbb{X}}^{\phi, p})$ is the linear map such that $\tau_{\phi,p}(X_i(p))=\hat{X}^{\phi,p}_i$, then for any privileged coordinate system $\phi'$, we have $\tau_{\phi',p}=\iota_{\phi,\phi'}\circ \tau_{\phi,p}$.  That is, $\tau_{\phi,p}$ induces an injective linear map $\tau_{p}\from \Delta(p) \to\mathfrak{nil}(M,p)$
whose image is the first stratum $V_1(\mathfrak{nil}(M,p) ) $ of $\mathfrak{nil}(M,p)$.

Mitchell and Bella\"iche proved the following theorem.
\begin{thm}[{\cite[Prop.\ 5.20, Thm.\ 7.36]{bellaiche}}]\label{thm:Bellaiche tangent cones}
  Let $M=(M,\Delta, g)$ be an equiregular sub-Riemannian manifold.
  The tangent cone $\Cone(M,p)$ of $M$ at $p$ exists and is isometric to the sub-Riemannian metric on $\Nil(M,p)$ with horizontal bundle $V_1(\mathfrak{nil}(M,p))$ and quadratic form $(\tau_p)_*(g_p)$.  
\end{thm}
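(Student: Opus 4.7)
The plan is to realize $\Cone(M,p)$ as the pointed Gromov--Hausdorff limit of rescaled copies of a neighborhood of $p$, and to identify that limit with the asserted sub-Riemannian structure on $\Nil(M,p)$. I would fix a privileged coordinate system $\phi\from U\to \R^n$ centered at $p$, with weights $w_1\le \dots\le w_n$, and let $\delta_\epsilon(x_1,\dots,x_n) = (\epsilon^{w_1}x_1,\dots,\epsilon^{w_n}x_n)$ denote the associated anisotropic dilations on $\R^n$. Transporting everything to $\R^n$ via $\phi$, each horizontal field decomposes as $X_i = X_i^{(-1)}+X_i^{(0)}+X_i^{(1)}+\cdots$, with $X_i^{(j)}$ homogeneous of weight $j$ under $\delta_\epsilon$ and $\hat X_i^{\phi,p} = X_i^{(-1)}$. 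A direct computation then gives
$$\epsilon\,(\delta_{\epsilon^{-1}})_* X_i = \hat X_i^{\phi,p} + \epsilon\, X_i^{(0)} + \epsilon^2\, X_i^{(1)} + \cdots,$$
so the rescaled horizontal frames converge smoothly on compact subsets of $\R^n$ to $(\hat X_1^{\phi,p},\dots,\hat X_d^{\phi,p})$ as $\epsilon\to 0$.

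Next, I would translate this convergence of frames into convergence of Carnot--Carath\'eodory distances. Let $d$ denote the sub-Riemannian distance transported to $\R^n$ by $\phi$, and let $\hat d$ be the sub-Riemannian distance generated by $\hat{\mathbb{X}}^{\phi,p}$, with the quadratic form on $V_1(\mathfrak{nil}(M,p))$ induced by $g_p$ via $\tau_p$. Both $d$ and $\hat d$ satisfy the Nagel--Stein--Wainger Ball--Box estimates in privileged coordinates, which control them uniformly near $0$ and give a priori size bounds on near-minimizing horizontal paths. Combining these a priori bounds with the smooth convergence of frames and a Gronwall-type estimate for horizontal lifts of $L^\infty$ controls, I would prove
$$\epsilon^{-1} d(\delta_\epsilon(x),\delta_\epsilon(y)) \longrightarrow \hat d(x,y)$$
uniformly on compact subsets of $\R^n\times \R^n$. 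This uniform convergence is exactly pointed Gromov--Hausdorff convergence of $(M,\epsilon^{-1}d,p)$ to $(\R^n,\hat d,0)$ as $\epsilon \to 0$, so the tangent cone exists and equals this limit.

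Finally, I would identify the limit with $\Nil(M,p)$. Since $\hat{\mathbb{X}}^{\phi,p}$ generates the graded nilpotent algebra $\mathfrak{nil}(M,p)$ and its members are polynomial and $\delta_\epsilon$-homogeneous of weight $-1$, the privileged coordinates on $\R^n$ can be matched with exponential coordinates of the second kind on $\Nil(M,p)$ relative to an adapted basis of $\mathfrak{nil}(M,p)$; under this identification the $\hat X_i^{\phi,p}$ form a left-invariant horizontal frame and $\hat d$ becomes exactly the left-invariant sub-Riemannian metric on $\Nil(M,p)$ described in the statement.

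The main obstacle is the uniform distance convergence in the second step, since smooth convergence of vector fields does not automatically imply convergence of sub-Riemannian metrics. One must rule out the possibility that near-minimizing horizontal curves at scale $\epsilon$ spend too long in regions where the lower-order perturbations $\epsilon X_i^{(0)}, \epsilon^2 X_i^{(1)},\ldots$ dominate over $\hat X_i^{\phi,p}$. The standard remedy is to use the Ball--Box theorem to a priori confine near-minimizers to the $\delta_\epsilon$-image of a fixed compact set, and then to compare, via Gronwall, the horizontal lifts of the same $L^\infty$ control in the two sub-Riemannian structures, bounding the endpoint discrepancy by $o(\epsilon)$ relative to the endpoint spacing measured in $d$ and $\hat d$.
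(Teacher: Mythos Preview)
The paper does not give its own proof of this theorem; it is quoted as a known result of Mitchell and Bella\"iche, with a citation to \cite[Prop.\ 5.20, Thm.\ 7.36]{bellaiche}. The paragraph immediately following the statement merely sketches how to realize $\Cone(M,p)$ concretely on $\R^n$ via the pushed-forward nilpotent approximations $\phi_*(\hat X_i^{\phi,p})$, without carrying out the Gromov--Hausdorff convergence argument.

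Your proposal is a faithful outline of the classical Mitchell--Bella\"iche argument: pass to privileged coordinates, rescale by the anisotropic dilations $\delta_\epsilon$ so that the horizontal frame converges smoothly to its nilpotent approximation, upgrade frame convergence to uniform convergence of Carnot--Carath\'eodory distances using Ball--Box bounds together with a Gronwall estimate on horizontal lifts of fixed controls, and finally identify the limiting structure with the left-invariant metric on $\Nil(M,p)$. You also correctly flag the only genuinely delicate step, namely confining near-minimizers so that smooth convergence of frames yields convergence of distances. This is precisely the route taken in \cite{bellaiche} and \cite{jeancontrol}, so there is nothing to contrast: your sketch matches the cited proof rather than anything the present paper supplies.
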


We can also construct $\Cone(M,p)$ directly.  Let $\mathbb{X}$ be an orthonormal frame and let $\hat{\mathbb{X}}^{\phi,p}$ be its nilpotent approximation with respect to a privileged coordinate system $\phi$.  The vector fields $\phi_*(\hat{X}^{\phi,p}_i)$ have polynomial coefficients, so we can extend them to all of $\R^n$.  There is a product structure on $\R^n$ that makes $\R^n$ into a Lie group isomorphic to $\Nil(M,p)$ such that each vector field $\phi_*(\hat{X}^{\phi,p}_i)$ is left-invariant.  If we equip $\R^n$ with the sub-Riemannian structure such that $\phi_*(\hat{X}^{\phi,p}_1),\dots, \phi_*(\hat{X}^{\phi,p}_d)$ are an orthonormal basis for the horizontal distribution, we obtain a left-invariant sub-Riemannian structure which is isometric to $\Cone(M,p)$.  

\bigskip

Given an orthonormal frame $\mathbb X:=(X_1, \ldots, X_{n_1})$ for a sub-Riemannian manifold, we say that an absolutely continuous curve $\gamma\from I\to M$ has {\em controls} $u=(u_1, \ldots, u_{n_1})$ with respect to $\mathbb X$ if 
$$\dot\gamma = u_1 X_1\circ \gamma + \ldots + u_{n_1} X_{n_1}\circ \gamma$$
almost everywhere.
Moreover, we say that the controls of such a 
$\gamma$ are {\em subunit} if 
$$u_1^2+ \ldots+ u_{n_1}^2 \leq1$$ 
almost everywhere.
Let
$$\|u\|_{L_1(L_2)}:=\int_I \sqrt{u_1(t)^2+ \ldots+ u_{n_1}(t)^2}\; dt.$$
Since the $X_i$'s are orthonormal, we have $\ell(\gamma)=\|u\|_{L_1(L_2)}$.

\begin{defn}
  Let $M_1, M_2$ be two manifolds equipped with two frames $\mathbb X$ and $\mathbb Y$, respectively. Assume that the ranks of the frames are the same.  If $\alpha\from I\to M_1$ and $\beta\from I\to M_2$ are two curves, we say that they have \emph{the same controls with respect to $\mathbb X$ and $\mathbb Y$}, respectively (or simply \emph{the same controls} when the frames are clear) if the controls of $\alpha$ with respect to $\mathbb X$ and the controls of $\beta$ with respect to $\mathbb Y$ are equal.   
\end{defn}

We can compare the geometry of the space and its tangent cone by comparing curves with the same controls in $M$ and in $\Nil(M,p)$.
Hereafter, we shall equip $\Nil(M,p)$ with the sub-Riemannian metric defined in Theorem~\ref{thm:Bellaiche tangent cones}, so that it is isometric to $\Cone(M,p)$, and view elements of $\mathfrak{nil}(M,p)$ as left-invariant vector fields on $\Nil(M,p)$.  
We don't claim originality in the following lemma, which follows from the work of
Bella\"iche-Jean.
\begin{lemma}\label{lem:closed curves transfer}
  Let $M$ be an equiregular sub-Riemannian $n$--manifold of step $s$ equipped with an adapted orthonormal frame $\mathbb X=(X_1,\dots, X_d)$.  For $p\in M$, the images $(\tau_p(X_1(p)),\dots, \tau_p(X_d(p)))$ form a left-invariant frame for the horizontal bundle of $\Nil(M,p)$.  We denote this frame by $\tau_p(\mathbb{X}(p))$.
  Let $\bar{p}\in M$.  
  There are \(C_0, L_0>0\), and a compact neighborhood $B_0$ of $\bar{p}$ with the following property.
  Let $p\in B_0$.  Let $\gamma\from [0,1]\to M$ and $\lambda\from [0,1]\to \Nil(M,p)$ be two horizontal curves with the same control $u$ with respect to $\mathbb{X}$ and $\tau_{p}(\mathbb{X}(p))$, respectively.  Suppose that $\|u\|_{L_1(L_2)}\le L_0$ and that $\gamma(0)=p$.

  If $\lambda$ is a closed curve, then 
  $$d_M(\gamma(0),\gamma(1))\le C_0 \|u\|_{L_1(L_2)}^{1+\frac{1}{s}}.$$

  If $\gamma$ is a closed curve, then 
  $$d_{\Nil(M,p)}(\lambda(0),\lambda(1))\le C_0 \|u\|_{L_1(L_2)}^{1+\frac{1}{s}}.$$
\end{lemma}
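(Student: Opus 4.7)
The plan is to reduce the lemma to the Bella\"iche--Jean comparison between a sub-Riemannian frame and its nilpotent approximation in privileged coordinates, and then to convert the resulting coordinate bound into a sub-Riemannian distance bound via the ball-box theorem. For each $p\in B_0$, fix a family of privileged coordinate systems $\phi_p$ centered at $p$, varying smoothly with $p$, so that $\phi_p$ simultaneously identifies a neighborhood of $p$ in $M$ with a neighborhood of $0\in \R^n$ and identifies $\Nil(M,p)$ with $\R^n$ equipped with the homogeneous group law described at the end of Section~\ref{sec:privileged coords}. Write $(\phi_p)_* X_i = \hat X_i^{\phi_p,p} + R_i^p$, where $R_i^p$ is a sum of homogeneous vector fields of non-negative weight whose coefficients are uniformly bounded on $B_0$, and let $w_1,\dots,w_n$ be the privileged weights. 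Bella\"iche--Jean's uniform ball-box theorem then supplies constants $C_1, L_0>0$ such that for every $p\in B_0$ and every $\ell\in(0,L_0]$, any horizontal curve starting at $0$ with $\|u\|_{L_1(L_2)}\le\ell$---whether driven by $(\phi_p)_*\mathbb X$ in $M$ or by $\hat{\mathbb X}^{\phi_p,p}$ in $\Nil(M,p)$---remains inside the weighted box $\{x\in\R^n : |x_j|\le C_1\ell^{w_j}\}$; in particular both $\gamma$ and $\lambda$ are trapped in this box.

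Next, I would bound the coordinate-wise difference $e(t):=\gamma(t)-\lambda(t)$ by subtracting the two ODEs driven by the common control $u$:
$$\dot e_j(t) = \sum_i u_i(t)\bigl[(\hat X_i^{\phi_p,p})_j(\gamma(t)) - (\hat X_i^{\phi_p,p})_j(\lambda(t))\bigr] + \sum_i u_i(t)\,(R_i^p)_j(\gamma(t)).$$
The $j$-th coefficient of $\hat X_i^{\phi_p,p}$ is a polynomial of homogeneous weight $w_j-1$, producing the Gronwall driving term via a weighted Lipschitz bound on the box. The $j$-th coefficient of $R_i^p$ has weight at least $w_j$, so is bounded pointwise on the box by $O(\ell^{w_j})$, and hence contributes, after integration against the $L^1$ bound of the control, a forcing term of size $O(\ell^{w_j+1})$. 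Running the corresponding weighted Gronwall estimate---the essential content of Bella\"iche's main comparison estimate \cite[Thm.~7.32]{bellaiche} together with its uniform version in \cite{jeancontrol}---yields
$$|e_j(t)| \le C_2\, \ell^{w_j+1}$$
uniformly for $t\in[0,1]$ and $p\in B_0$.

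Finally, if $\lambda$ is closed then $\lambda(1)=0$ in coordinates, so $|\gamma(1)_j|=|e_j(1)|\le C_2\ell^{w_j+1}$. The upper ball-box bound in $M$ then gives
$$d_M(\gamma(0),\gamma(1))\le C_3\max_{j}|\gamma(1)_j|^{1/w_j} \le C_3\max_j\bigl(C_2\ell^{w_j+1}\bigr)^{1/w_j} \le C_0\,\ell^{1+1/s},$$
where the last inequality uses $\ell\le L_0<1$ and $w_j\le s$. The case when $\gamma$ is closed is symmetric: the same coordinate bound gives $|\lambda(1)_j|\le C_2\ell^{w_j+1}$, and one invokes the ball-box upper estimate in $\Nil(M,p)$ instead. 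The main obstacle will be securing the uniformity in $p$ of both the ball-box constants and of the weighted Gronwall estimate; once one has a smoothly varying family $\phi_p$ of privileged coordinates and uniform control of the frame on $B_0$, the exponent $1+1/s$ arises automatically as the worst case $w_j=s$.
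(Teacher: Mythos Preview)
Your proposal is correct and follows essentially the same route as the paper: choose privileged coordinates varying smoothly with $p$, use the Bella\"iche--Jean comparison estimate to bound the coordinate difference $\phi_p(\gamma(1))-\lambda(1)$ by $\|u\|^{1+1/s}$ in the weighted pseudo-norm, and convert to a sub-Riemannian distance via the ball-box inequality on each side. The only difference is packaging: the paper simply cites Jean's estimate \cite[(2.14)]{jeancontrol} together with the uniform ball-box bounds \cite[Thms.~2.2--2.3]{jeancontrol}, whereas you outline the weighted-Gronwall argument that underlies (2.14); structurally the two proofs are the same.
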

\begin{proof}
  By  \cite[Theorem 2.3]{jeancontrol}, there are $C_0,L_0>0$ and a compact set $B_0\subset M$ containing a neighborhood of $\bar{p}$ with the following property.  Let $p\in B_0$.  There is a system of privileged coordinates $\phi\from U\to \R^n$ defined on a neighborhood $U$ of $p$ such that $\phi(p)=\mathbf{0}$ and if $\|\cdot\|_p$ is the pseudo-norm
  $$\|(x_1,\dots, x_n)\|_p=\sum_i |x_i|^{\frac{1}{w_i}},$$
  then for any $q$ such that $d(p,q)\le L_0$, we have
  \begin{equation}\label{eq:ball box 1}
    \frac{1}{C_0} \|\phi(q)\|_p \le d_M(p,q)\le C_0 \|\phi(q)\|_p.
  \end{equation}

  We identify $\Nil(M,p)$ with $\R^n$, equipped with the sub-Riemannian structure defined by the vector fields $\phi_*(\hat{X}^{\phi,p}_1),\dots, \phi_*(\hat{X}^{\phi,p}_d)$.  By left-invariance, we may suppose that $\lambda(0)=\mathbf{0}$.  By  \cite[Theorem 2.2]{jeancontrol}, we can choose $C_0$, $L_0$, and $B_0$ so that 
  \begin{equation}\label{eq:ball box 2}
    \frac{1}{C_0} \|v\|_p \le d_{\Nil(M,p)}(0,v)\le C_0 \|v\|_p, \qquad \text{for all }v\in \R^n.
  \end{equation}

 
  By  \cite[(2.14)]{jeancontrol}, (taking $q=p$), there is a $C>0$ such that 
  \begin{equation}\label{eq:Jean 214}
    \|\phi(\gamma(1))-\lambda(1)\|_p\le C \|u\|_{L_1(L_2)}^{1+\frac{1}{s}}.
  \end{equation}
    Suppose that $\lambda$ is a closed curve, so that $\lambda(1)=\mathbf{0}$.  By \eqref{eq:ball box 1} and \eqref{eq:Jean 214},
  $$d_M(\gamma(0),\gamma(1)) \le C_0 \|\phi(\gamma(1))\|_p\le C\cdot C_0 \|u\|_{L_1(L_2)}^{1+\frac{1}{s}}.$$
  Likewise, if $\gamma$ is a closed curve, then $\phi(\gamma(1))=\mathbf{0}$, so
  $$d_{\Nil(M,p)}(\lambda(0), \lambda(1)) \le C_0 \|\lambda(1)\|_p \le C\cdot C_0 \|u\|_{L_1(L_2)}^{1+\frac{1}{s}}.$$
\end{proof}

\section{Manifolds with constant tangent}\label{sec:constant tangent}
In this section, we prove an approximation result for sub-Riemannian manifolds with constant tangent.  


If $M$ is a sub-Riemannian manifold with horizontal distribution $\Delta$ and $\mathbb{X}$ is a frame for $\Delta$, we define $d_{\mathbb{X}}$ to be the sub-Riemannian distance function on $M$ for which $\mathbb{X}$ is an orthonormal frame.
\begin{lemma}\label{lem:transfer lemma}
  Let $(M,\Delta)$ be an equiregular sub-Riemannian manifold and let $G$ be a Carnot group with Lie algebra $\mathfrak{g}$ such that for every $p\in M$, there is an isomorphism $a_p\from \mathfrak{g}\to \mathfrak{nil}(M,p)$.  Suppose that the $a_p$'s vary smoothly in the sense that there is a basis $\mathbb{Y}=(Y_1,\dots, Y_d) \in V_1(\mathfrak{g})$ (i.e., a left-invariant frame of the horizontal bundle) such that if
  $$X_i(p):=\tau_{p}^{-1}(a_p(y_i)),$$
  then $\mathbb{X}=(X_1,\dots, X_d)$ is a smooth frame for $\Delta$, where $\tau_{p}\from \Delta(p) \to\mathfrak{nil}(M,p)$ is the map defined in Section~\ref{sec:privileged coords}.

  For every $p\in M$, there   are \(C, L>0\), and a compact neighborhood $B$    of $p$    with the following property.  Let \(q\in B\) and 
  for $i=1,2$
  let \(\gamma_i\from [0,1]\to M\) be horizontal curves with controls $u_i$ such that \(\gamma_i(0)=q\) and $\|u_i\|_{L_1(L_2)}\le L$.  Let \(\lambda_i\from [0,1]\to G\) be the curves in \(G\) with \(\lambda_i(0)=\mathbf{0}\) and with     controls $u_i$ with respect to $\mathbb{Y}$.  Then
  \begin{equation}\label{eq:transfer controls}
    |d_{\mathbb{X}}(\gamma_1(1),\gamma_2(1))-d_{\mathbb{Y}} (\lambda_1(1),\lambda_2(1))|\le C(\|u_1\|_1+\|u_2\|_1)^{1+\frac{1}{s}}.
  \end{equation}
\end{lemma}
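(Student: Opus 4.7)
The plan is to derive the lemma from the two one-sided estimates
\begin{align*}
  d_{\mathbb{X}}(\gamma_1(1),\gamma_2(1)) &\le d_{\mathbb{Y}}(\lambda_1(1),\lambda_2(1)) + C_0 E^{1+1/s},\\
  d_{\mathbb{Y}}(\lambda_1(1),\lambda_2(1)) &\le d_{\mathbb{X}}(\gamma_1(1),\gamma_2(1)) + C_0 E^{1+1/s},
\end{align*}
where $E := \|u_1\|_1 + \|u_2\|_1 + \min\{d_{\mathbb{X}}(\gamma_1(1),\gamma_2(1)),\,d_{\mathbb{Y}}(\lambda_1(1),\lambda_2(1))\}$. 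The concatenations $\gamma_1^{-1}\ast\gamma_2$ in $M$ and $\lambda_1^{-1}\ast\lambda_2$ in $G$ yield the trivial bounds $d_{\mathbb{X}},d_{\mathbb{Y}}\le \|u_1\|_1+\|u_2\|_1$, so $E\le 2(\|u_1\|_1+\|u_2\|_1)$ and the two estimates above imply \eqref{eq:transfer controls} after increasing the constant. The essential trick is to invoke Lemma~\ref{lem:closed curves transfer} not at $q$ but at the rebased point $p=\gamma_1(1)$, which lies in the compact set $B_0$ from that lemma once $B$ and $L$ are chosen small.

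For the first inequality I would let $v$ be the controls of a minimizing geodesic in $G$ from $\lambda_2(1)$ to $\lambda_1(1)$, so that $\|v\|_1 = d_{\mathbb{Y}}(\lambda_1(1),\lambda_2(1))$. The curve in $M$ starting at $\gamma_1(1)$ with the concatenated controls $u_1^-\cdot u_2\cdot v$ runs $\gamma_1(1)\to q\to \gamma_2(1)\to r$ for some $r\in M$. Thanks to the smoothness hypothesis $X_i(p)=\tau_p^{-1}(a_p(y_i))$, the frames $\mathbb{Y}$ and $\tau_{\gamma_1(1)}(\mathbb{X}(\gamma_1(1)))$ correspond under $a_{\gamma_1(1)}$, so the same controls in $\Nil(M,\gamma_1(1))\cong G$ starting at $\mathbf{0}$ trace, by left-invariance and the defining property of $v$,
$$\mathbf{0}\xrightarrow{u_1^-}\lambda_1(1)^{-1}\xrightarrow{u_2}\lambda_1(1)^{-1}\lambda_2(1)\xrightarrow{v}\lambda_1(1)^{-1}\lambda_2(1)\cdot\lambda_2(1)^{-1}\lambda_1(1)=\mathbf{0},$$
a closed curve. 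Lemma~\ref{lem:closed curves transfer} applied at $p=\gamma_1(1)$ then gives $d_{\mathbb{X}}(\gamma_1(1),r)\le C_0(\|u_1\|_1+\|u_2\|_1+d_{\mathbb{Y}})^{1+1/s}$; and since the $M$-curve of controls $v$ from $\gamma_2(1)$ has length $\|v\|_1=d_{\mathbb{Y}}$, we have $d_{\mathbb{X}}(r,\gamma_2(1))\le d_{\mathbb{Y}}$, so the triangle inequality closes this direction.

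The reverse inequality is symmetric, with the roles of $M$ and $G$ exchanged. Take $w$ to be the controls of a minimizing $M$-geodesic from $\gamma_2(1)$ to $\gamma_1(1)$, so that $\|w\|_1=d_{\mathbb{X}}$. The concatenation $\gamma_1^{-1}\ast\gamma_2\ast\gamma_w$ is now a closed loop in $M$ based at $\gamma_1(1)$ with controls $u_1^-\cdot u_2\cdot w$, while the corresponding $G$-curve from $\mathbf{0}$ ends at $g=\lambda_1(1)^{-1}\lambda_2(1)\cdot g_w$, where $g_w\in G$ denotes the endpoint of the $G$-curve with controls $w$ from $\mathbf{0}$ and satisfies $d_{\mathbb{Y}}(\mathbf{0},g_w)\le \|w\|_1=d_{\mathbb{X}}$. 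Lemma~\ref{lem:closed curves transfer} in the closed-in-$M$ direction gives $d_{\mathbb{Y}}(\mathbf{0},g)\le C_0 E^{1+1/s}$; left-invariance of $d_{\mathbb{Y}}$ yields $d_{\mathbb{Y}}(g,\lambda_1(1)^{-1}\lambda_2(1))=d_{\mathbb{Y}}(\mathbf{0},g_w)\le d_{\mathbb{X}}$, and since $d_{\mathbb{Y}}(\lambda_1(1),\lambda_2(1))=d_{\mathbb{Y}}(\mathbf{0},\lambda_1(1)^{-1}\lambda_2(1))$, the triangle inequality finishes.

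The main obstacle is noticing that Lemma~\ref{lem:closed curves transfer} must be applied at $\gamma_1(1)$ and not at $q$: when applied at $q$, any correction by a $G$- or $M$-geodesic produces a loop that in general fails to close on the nose in either space, since the endpoint in one space does not match the endpoint in the other. Rebasing at $\gamma_1(1)$ turns the correction into an exact closure in one of the two spaces, reducing the entire estimate to left-invariance arithmetic in $G$. The values of $C$, $L$, and $B$ are then dictated only by the requirement that $\gamma_1(1)\in B_0$ and that the total $L^1$-norm of the concatenated controls remain below $L_0$, which is easily arranged by taking $L\le L_0/4$ and $B$ compactly contained in $B_0$.
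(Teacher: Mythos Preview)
Your proof is correct and follows essentially the same strategy as the paper's: form a triangle from $\gamma_1$, $\gamma_2$, and a minimizing geodesic in one of the two spaces, observe that this triangle is closed in that space, and apply Lemma~\ref{lem:closed curves transfer} at the endpoint $\gamma_1(1)$ (the paper uses $\gamma_2(1)$, a symmetric choice) to bound the failure of closure in the other space. Your write-up is in one respect more explicit than the paper's, since you spell out the identification $G\cong \Nil(M,\gamma_1(1))$ via $a_{\gamma_1(1)}$ and the left-translation bookkeeping that makes the $G$-curve close at $\mathbf{0}$; the paper leaves this implicit by working with the triangle directly in $G$ and invoking the left-invariance clause of Lemma~\ref{lem:closed curves transfer}.
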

\begin{proof}
  Let $B_0$, $C_0$, and $L_0$ be as in Lemma~\ref{lem:closed curves transfer}.  Let $0<L\le \frac{L_0}{2}$ be small enough that the sub-Riemannian ball $B_{L}(p)$ is contained in $B_0$ and let $B=\overline{B_{L}(p)}$.  This choice ensures that if $q$, $\gamma_1$, and $\gamma_2$ satisfy the hypotheses, then $\gamma_1(1), \gamma_2(1)\in B_0$.   
  
  Let $\alpha\from [0,1] \to G$ be a geodesic from $\lambda_1(1)$ to $\lambda_2(1)$.  We have $\ell(\alpha)\le \|u_1\|_1+\|u_2\|_1 \le 2L$.  The curves $\lambda_1$, $\lambda_2$, and $\alpha$ form a triangle in $G$, and we define $a\from [0,3]\to G$ to be the closed curve that traces the triangle starting at $\lambda_2(1)$, i.e., 
  $$a(t)=\begin{cases} 
    \lambda_2(1-t) & t\in [0,1]\\
    \lambda_1(t-1) & t\in [1,2]\\
    \alpha(t-2) & t\in [2,3].
  \end{cases}$$
  Let $b\from [0,3]\to M$ be the curve with the same controls such that $b(0)=\gamma_2(1)\in B_0$.
  Since $b$ has the same controls as $a$, we have
  $$d_{\mathbb{X}}(b(2),b(3))\le \ell(\alpha)=d_{\mathbb{Y}}(\lambda_1(1),\lambda_2(1)).$$
  Furthermore, $b|_{[0,1]}$ is the reverse of $\gamma_2$ and $b|_{[1,2]}$ is $\gamma_1$, so $b(2)=\gamma_1(1)$.
  Thus, by Lemma~\ref{lem:closed curves transfer}, 
  \begin{align*}
    d_{\mathbb{X}}(\gamma_1(1), \gamma_2(1)) 
    &\le d_{\mathbb{X}}(b(2),b(3)) + d_{\mathbb{X}}(b(3), \gamma_2(1)) \\ 
    &\le d_{\mathbb{Y}}(\lambda_1(1),\lambda_2(1)) + C_0 \ell(b)^{1+\frac{1}{s}}\\ 
    &\le d_{\mathbb{Y}}(\lambda_1(1),\lambda_2(1)) + C_0 \left(2(\|u_1\|_1+\|u_2\|_1)\right)^{1+\frac{1}{s}}.
  \end{align*}

  This proves one inequality.  To prove the other inequality, we apply the same procedure with $\gamma$ and $\lambda$ switched.  That is, we connect $\gamma_1(1)$ and $\gamma_2(1)$ by a geodesic to construct a curve $a\from [0,3]\to M$ such that $c(0)=c(3)=\gamma_2(1)$, $c(1)=q$, and $c(2)=\gamma_1(1)$.  Let $b\from [0,3]\to M$ be the curve with the same controls such that $b(0)=\lambda_2(1)$, so that $b(1)=\mathbf{0}$ and $b(2)=\lambda_1(1)$.    As above,
  $$d_{G}(b(2),b(3))\le d_{\mathbb{X}}(\gamma_1(1),\gamma_2(1)),$$
  and by Lemma~\ref{lem:closed curves transfer},
  \begin{multline*}
    d_{\mathbb{Y}}(\lambda_1(1), \lambda_2(1)) \le d_{\mathbb{X}}(\gamma_1(1),\gamma_2(1)) + C_0 \ell(b)^{1+\frac{1}{s}}\\
    \le d_{\mathbb{X}}(\gamma_1(1),\gamma_2(1)) + C_0 \left(2(\|u_1\|_1+\|u_2\|_1)\right)^{1+\frac{1}{s}}.
  \end{multline*}
\end{proof}

Next, in Lemma~\ref{lem:isomorphic frames}, we shall prove   that a sub-Riemannian manifold $M$ has local frames satisfying Lemma~\ref{lem:transfer lemma} if and only if its tangent Lie algebra is constant.  We first need some notation and the following Lemma~\ref{lem:space of forms}. 

Let $V$ be a finite-dimensional vector space and let $F_V:=\Hom(V\wedge V,V)$, 
seen as the set of alternating bilinear maps.  This is a finite-dimensional vector space and thus an algebraic variety.
Thus, if $\Psi\in F_V$ satisfies the Jacobi identity, then $(V,\Psi)$ is a Lie algebra.  For any Lie algebra $\mathfrak{g}$ of the same dimension, we define
$$E_{\mathfrak{g}} := \{\Psi\in F_V\mid (V,\Psi)\cong \mathfrak{g}\},$$
where the symbol $\cong$ denotes Lie algebra isomorphism.
\begin{lemma}\label{lem:space of forms}
  Let $\mathfrak{g}$ and $E_{\mathfrak{g}}$ be as above.  Then $E_{\mathfrak{g}}$ is a smooth submanifold of $F_V$.
\end{lemma}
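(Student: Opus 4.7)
The plan is to realize $E_{\mathfrak{g}}$ as a single orbit of a Lie group action on $F_V$ and then invoke the orbit--stabilizer theorem, together with an algebraicity argument to upgrade to an embedded submanifold.

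First, I would introduce the natural linear (hence smooth) action of $\GL(V)$ on $F_V$ by change of basis,
$$(g\cdot \Psi)(u,v) := g\bigl(\Psi(g^{-1}u, g^{-1}v)\bigr),$$
for $g\in\GL(V)$, $\Psi\in F_V$, and $u,v\in V$. A direct check shows that this action preserves the Jacobi identity and that $g$ implements a Lie algebra isomorphism $(V,\Psi)\to (V,g\cdot\Psi)$; conversely, any linear isomorphism between $(V,\Psi)$ and $(V,\Psi')$ is some $g\in \GL(V)$ with $\Psi' = g\cdot \Psi$. Hence two brackets in $F_V$ lie in the same $\GL(V)$-orbit if and only if they define isomorphic Lie algebras.

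Next, fix any $\Psi_0\in F_V$ with $(V,\Psi_0)\cong \mathfrak{g}$. The previous observation identifies $E_{\mathfrak{g}} = \GL(V)\cdot \Psi_0$ as a single $\GL(V)$-orbit. The stabilizer of $\Psi_0$ is the automorphism group $\operatorname{Aut}(V,\Psi_0)\cong \operatorname{Aut}(\mathfrak{g})$, a closed Lie subgroup of $\GL(V)$ (it is cut out by the polynomial equations expressing that $g$ intertwines $\Psi_0$). By the orbit--stabilizer theorem for smooth Lie group actions, the orbit map then descends to a smooth injective immersion
$$\GL(V)/\operatorname{Aut}(\mathfrak{g}) \hookrightarrow F_V$$
whose image is $E_{\mathfrak{g}}$, so $E_{\mathfrak{g}}$ acquires the structure of an immersed smooth submanifold, with tangent space at $\Psi_0$ given by the image of the infinitesimal action $A\mapsto A\Psi_0(\cdot,\cdot)-\Psi_0(A\cdot,\cdot)-\Psi_0(\cdot,A\cdot)$ of $\operatorname{End}(V)$.

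The one remaining subtlety, which I expect to be the main obstacle, is upgrading ``immersed'' to ``embedded''. The $\GL(V)$-action on the affine space $F_V$ is algebraic (polynomial in the entries of $g$ and $g^{-1}$), so by a classical theorem of Chevalley each $\GL(V)$-orbit in $F_V$ is locally closed in $F_V$. A locally closed orbit of a smooth Lie group action is automatically embedded, which gives $E_{\mathfrak{g}}$ the structure of a smooth (embedded) submanifold of $F_V$ as claimed.
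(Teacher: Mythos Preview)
Your proposal is correct and follows essentially the same approach as the paper: both identify $E_{\mathfrak{g}}$ as a single $\GL(V)$--orbit under the change-of-basis action and then invoke the algebraicity of the action (Chevalley's theorem) to conclude that the orbit is locally closed. The only cosmetic difference is in the final step: you pass through the orbit--stabilizer theorem to get an immersed submanifold and then use local closedness to upgrade to embedded, whereas the paper argues that a locally closed image of a regular map is a quasi-projective variety whose singular locus must be empty by homogeneity.
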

\begin{proof}  
  Let $V, W$ be vector spaces, let $A\from V\to W$ be a linear isomorphism, and let $\Psi\in \Hom(V \wedge V, V)$.   We define the push-forward of $\Psi$ by $A$ to be the map $A_*\Psi\in \Hom(W \wedge W, W)$,
  $$(A_*\Psi)(w_1,w_2) = A\Psi(A^{-1}w_1,A^{-1}w_2).$$
  If $B\from W\to X$ is a linear isomorphism, then $A_*B_*\Psi=(AB)_*\Psi$.  That is, this defines a left action of $\GL(V)$ on $F_V$.  
  If $(V,\Psi)$ is a Lie algebra with bracket $\Psi$, then $(W, A_*\Psi)$ is a Lie algebra and $A$ is an isomorphism from $(V,\Psi)$ to $(W,A_*\Psi)$.
  
  Fix some  $\Psi_0\in E_{\mathfrak{g}}$ and let $f\from \GL(V) \to F_V$ be the orbit map $f(A):=A_* \Psi_0$.
  If $\Psi \in E_{\mathfrak{g}}$, then there is a Lie algebra isomorphism $A\from (V,\Psi_0) \to (V,\Psi)$, so $A_*\Psi_0=\Psi$.  It follows that $E_{\mathfrak{g}}=\GL(V)_* \cdot \Psi_0
  = f ( \GL(V)) $.   
      
  The map $f$ has polynomial coefficients, so it is a regular map (in the sense of algebraic geometry) on an affine variety.  Consequently, its image is locally closed in the Zariski topology (see for instance \cite[3.16]{Harris_Algebraic_geometry}).  In particular, $E_\mathfrak{g}$ is a smooth submanifold except possibly on a singular set of lower dimension. In particular the singular set is a proper subset of $E_\mathfrak{g}$.
        Since $\GL(V)$ acts transitively on $E_\mathfrak{g}$, the space $E_\mathfrak{g}$ is homogeneous, so  the singular set is empty. 
        Thus $E_\mathfrak{g}$ is a smooth submanifold.
\end{proof}
If $\mathfrak{g}$ is a stratified Lie algebra and $V=V_1(V)\oplus V_2(V)\oplus \dots$ is a graded vector space with $\dim V_i(V)=\dim V_i(\mathfrak{g})$, it likewise holds that
\begin{equation}\label{eq:E strat}
  E^{\mathrm{s}}_{\mathfrak{g}} = \{\Psi\in F_V\mid (V,\Psi)\cong_{\mathrm{s}} \mathfrak{g}\}
\end{equation}
is a smooth submanifold of $F_V$.  Here the symbol $\cong_{\mathrm{s}}$ denotes a stratified Lie algebra isomorphism, i.e., an isomorphism $A\from (V,\Psi)\to \mathfrak{g}$ such that $A(V_i(V))=V_i(\mathfrak{g})$.

Lemma~\ref{lem:space of forms} allows us to prove the following. 
\begin{lemma}\label{lem:isomorphic frames}
  Let $(M,\Delta)$ be an equiregular sub-Riemannian manifold.  Suppose that there is a     Lie algebra $\mathfrak{g}$ such that $\mathfrak{g}\cong\mathfrak{nil}(M,p)$ for every point $p\in M$.  
  For any $p\in M$, there are a neighborhood $U$ of $p$, a family of isomorphisms $a_q\from \mathfrak{g}\to \mathfrak{nil}(M,q)$, and a basis $Y_1,\dots, Y_d\in V_1(\mathfrak{g})$ satisfying the assumptions of Lemma~\ref{lem:transfer lemma} for $M=U$.
\end{lemma}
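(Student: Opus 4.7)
The plan is to trivialize the graded tangent bundle of $M$ near $p$ by an adapted frame, obtain a smoothly varying family of stratified Lie brackets on a fixed graded vector space, and then use Lemma~\ref{lem:space of forms} together with the transitivity of the graded linear group on $E^{\mathrm{s}}_\mathfrak{g}$ to extract the isomorphisms $a_q$ smoothly in $q$.

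First, I would fix an adapted local frame $(Z_1,\dots,Z_n)$ on a neighborhood $U$ of $p$ with $Z_1,\dots,Z_d$ spanning $\Delta$ and each $Z_i$ of weight $w_i$. Setting $V := \R^n$ with grading $V_j := \opspan\{e_i\mid w_i = j\}$, the frame gives, at each $q\in U$, a graded linear isomorphism $\psi_q\from V \to \bigoplus_j \Delta_j(q)/\Delta_{j-1}(q)$. On the target, the Lie bracket of vector fields on $M$ descends fiberwise to a stratified bracket whose value at $q$ is canonically isomorphic to $\mathfrak{nil}(M,q)$; pulling this bracket back along $\psi_q$ produces $\Psi_q\in F_V$. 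The family $q\mapsto \Psi_q$ is smooth because the Lie bracket of smooth vector fields is smooth in the base point, and by hypothesis $\Psi_q\in E^{\mathrm{s}}_\mathfrak{g}$ for every $q\in U$.

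Next, let $\GL_{\mathrm{s}}(V)\subset \GL(V)$ denote the closed Lie subgroup of graded automorphisms. By~\eqref{eq:E strat}, $E^{\mathrm{s}}_\mathfrak{g}$ is a smooth submanifold of $F_V$, and $\GL_{\mathrm{s}}(V)$ acts transitively on it by push-forward, since any stratified isomorphism is in particular a graded linear map. Arguing as in the proof of Lemma~\ref{lem:space of forms}, the orbit map $B\mapsto B_*\Psi_p$ is a smooth submersion from $\GL_{\mathrm{s}}(V)$ onto $E^{\mathrm{s}}_\mathfrak{g}$, hence admits a smooth local section $\sigma$ near $\Psi_p$. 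After shrinking $U$, I would fix a stratified isomorphism $A_0\from \mathfrak{g}\to (V,\Psi_p)$ and set $a_q := \psi_q\circ \sigma(\Psi_q)\circ A_0 \from \mathfrak{g}\to \mathfrak{nil}(M,q)$, a smoothly varying family of stratified Lie algebra isomorphisms.

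Finally, under the identification $\psi_q$ the map $\tau_q\from \Delta(q)\to V_1(\mathfrak{nil}(M,q))$ recalled in Section~\ref{sec:privileged coords} is simply the canonical identification $\Delta(q) \to \Delta(q)/\Delta_0(q)$, so $\tau_q^{-1}$ depends smoothly on $q$. Picking any basis $Y_1,\dots,Y_d$ of $V_1(\mathfrak{g})$ and setting $X_i(q) := \tau_q^{-1}(a_q(Y_i))$ then yields the required smooth frame of $\Delta$ on $U$ satisfying the hypotheses of Lemma~\ref{lem:transfer lemma}. The main subtlety I expect is in the first step: verifying that the intrinsic graded bracket on $\bigoplus_j \Delta_j(q)/\Delta_{j-1}(q)$ coincides with the bracket on $\mathfrak{nil}(M,q)$ defined in Section~\ref{sec:privileged coords} via nilpotent approximations in privileged coordinates, so that the manifest smoothness of the former transfers to the object one really wants.
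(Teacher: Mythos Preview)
Your proposal is correct and follows the same architecture as the paper: produce a smoothly varying family $q\mapsto \Psi_q\in E^{\mathrm{s}}_\mathfrak{g}\subset F_V$, use that $E^{\mathrm{s}}_\mathfrak{g}$ is a smooth submanifold and a single $\GL_{\mathrm{s}}(V)$--orbit to get a local smooth section of the orbit map, and compose with a fixed isomorphism $\mathfrak{g}\to (V,\Psi_p)$ to obtain the $a_q$.

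The one substantive difference is in how you build $\Psi_q$. The paper stays inside its own definitions: it takes a smoothly varying family of privileged coordinate systems $\phi_q$ (via \cite[2.2.2]{jeancontrol}), forms the nilpotent approximations $\hat{W}_i^{\phi_q,q}$, and reads off the structure constants, which are manifestly smooth in $q$; the identification with $\mathfrak{nil}(M,q)$ and with $\tau_q$ is then tautological. You instead use the intrinsic Levi bracket on $\bigoplus_j \Delta_j/\Delta_{j-1}$, which makes smoothness of $\Psi_q$ immediate and avoids privileged coordinates entirely. The price, which you correctly identify, is that in this paper $\mathfrak{nil}(M,q)$ and $\tau_q$ are \emph{defined} through privileged coordinates, so you owe a check that your intrinsic bracket agrees with the paper's under $\psi_q$ and that $\tau_q$ then becomes the quotient map $\Delta(q)\to \Delta_1(q)/\Delta_0(q)$. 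This identification is standard (it is essentially the content of \cite[5.21]{bellaiche}), so your route is a legitimate and slightly more conceptual variant; the paper's route simply sidesteps the issue by never leaving the privileged-coordinate picture.
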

\begin{proof}
  We first construct a grading of $\R^n$ and a family of smoothly varying forms $\Psi_q\in F_{\R^n}$ such that $(\R^n,\Psi_q)\cong_{\mathrm{s}} \mathfrak{nil}(M,q)$; i.e., $\Psi_q\in E^{\mathrm{s}}_{\mathfrak{g}}$.  By Lemma~\ref{lem:space of forms}, $E^{\mathrm{s}}_{\mathfrak{g}}$ is a smooth manifold, so we may apply the Implicit Function Theorem to produce the desired isomorphisms.  
  
  
  Let $U$ be a neighborhood of $p$ such that there is an adapted frame $(W_1,\dots, W_n)$ defined on $U$.  As in \cite[2.2.2]{jeancontrol}, for every $q\in U$, we can use the $W_i$'s to produce a system $\phi_q$ of exponential coordinates such that $\phi_q$ is privileged at $q$ and varies smoothly with $q$.  Let $\hat{W}^q_i:=\hat{W}^{\phi_q,q}_i$ be the nilpotent approximation of the $W_i$'s at $q$.  By the results of Section~\ref{sec:privileged coords} the span of the $\hat{W}^q_i$ is a Lie algebra of vector fields which is canonically isomorphic to $\mathfrak{nil}(M,q)$; let 
  $$\mathfrak{g}_q:= \langle \hat{W}^{q}_1,\dots, \hat{W}^{q}_n \rangle$$
  and let $\iota_q\from \mathfrak{g}_q\to \mathfrak{nil}(M,q)$ be the canonical isomorphism.  This is the unique isomorphism such that $\iota_q(V)=\tau_q(V(q))$ for all $V\in V_1(\mathfrak{g}_q)$.  

  For all $q\in U$, we define the basis $\widehat{\mathbb{W}}^q:=(\hat{W}^{q}_1,\dots, \hat{W}^{q}_n)$, and for all $v=(v_1,\dots, v_n) \in \R^n$, let
  $$\widehat{\mathbb{W}}^qv:=v_1 \hat{W}^{q}_1 + \dots + v_n \hat{W}^{q}_n\in  \mathfrak{g}_q.$$  
  This induces an linear isomorphism from $\R^n$ to $\mathfrak{g}_q$.  Let $\Psi_q:=((\widehat{\mathbb{W}}^q)^{-1})_*[\cdot,\cdot]_q\in F_{\R^n}$, i.e.,
  $$\Psi_q(v,w)=(\widehat{\mathbb{W}}^q)^{-1}\left[\widehat{\mathbb{W}}^qv, \widehat{\mathbb{W}}^qw\right]_q,$$
  so that $(\R^n,\Psi_q)$ is a stratified Lie algebra isomorphic to $\mathfrak{g}_q$, with strata
  $$V_i(\R^n):=(\widehat{\mathbb{W}}^q)^{-1} V_i(\mathfrak{g}_q).$$
  That is, $V_i(\R^n)$ is the subspace spanned by the $n_{i}$th through $(n_{i+1}-1)$th coordinate vectors.  The coordinates of $\Psi_q$ in $F_{\R^n}$ are the structure coefficients of $\mathfrak{g}_q$ with respect to $\widehat{\mathbb{W}}^q$, so $\Psi_q$ varies smoothly with $q$. 

  Let $S\subset \GL_n(\R)$ be the subgroup of block-diagonal matrices that preserve the grading of $\R^n$ and let $E^{\mathrm{s}}_{\mathfrak{g}}$ be as in \eqref{eq:E strat}.  For every $q$, we have $(\R^n,\Psi_q) \cong_{\mathrm{s}} \mathfrak{g},$ so $\Psi_q\in E^{\mathrm{s}}_{\mathfrak{g}}$; indeed, $E^{\mathrm{s}}_{\mathfrak{g}}=S_*\Psi_q$.  By the remark after Lemma~\ref{lem:space of forms},  $E^{\mathrm{s}}_{\mathfrak{g}}$ is a smooth submanifold of $F_{\R^n}$.  Let $k=\dim E^{\mathrm{s}}_{\mathfrak{g}}$.  

  Let $f\from S\to E^{\mathrm{s}}_{\mathfrak{g}}$ be the map $f(A):=A_*\Psi_p$.  This map $f$ is smooth and surjective, and its derivative $Df$ has constant rank, so by Sard's theorem, $Df$ has rank $k$ everywhere.  By the Implicit Function Theorem, there is a neighborhood $T\subset E^{\mathrm{s}}_{\mathfrak{g}}$ of $\Psi_p$ and a smooth section $\alpha\from T\to S$ such that $\alpha(\Psi_p)=I$ and $f(\alpha(\Psi)) = \alpha(\Psi)_* \Psi_p = \Psi$ for all $\Psi \in T$.  That is, $\alpha(\Psi)$ is a stratified isomorphism from $(\R^n, \Psi_p)$ to $(\R^n, \alpha(\Psi)_* \Psi_p) = (\R^n, \Psi)$.  We fix a stratified isomorphism $\beta \from \mathfrak{g} \to (\R^n, \Psi_p)$, and define
  $$a_q :=\iota_q\circ \widehat{\mathbb{W}}^q  \circ \alpha(\Psi_q) \circ \beta$$
  for all $q\in U\cap \psi^{-1}(T)$.  This is a family of stratified isomorphisms from $\mathfrak{g}$ to $\mathfrak{g}_q$, and for any $Y\in V_1(\mathfrak{g})$, 
  $$\tau_q^{-1}\circ a_q(Y)= \tau_q^{-1}\left(\iota_q\left(\widehat{\mathbb{W}}^q\alpha(\Psi_q) \beta(Y)\right)\right) =\left(\widehat{\mathbb{W}}^q \alpha(\Psi_q) \beta(Y)\right)(q).$$
  This depends smoothly on $q$, so any basis $Y_1,\dots, Y_d\in V_1(\mathfrak{g})$ satisfies the assumptions of Lemma~\ref{lem:transfer lemma}, as desired.
\end{proof}

The following lemma is a further consequence of Lemma~\ref{lem:space of forms}.
We say that a subset $X$   is {\em locally closed} if, for all $p\in X$, there is a neighborhood $V$ of $p$ such that $X\cap V$ is relatively closed in $V$.  
\begin{prop}\label{prop:locally_closed}
  Let $M$ be an equiregular sub-Riemannian manifold and let $\mathfrak{g}$ be a  Lie algebra.  Let $X=\{p\in M\mid \mathfrak{nil}(M,p)\cong \mathfrak{g}\}$.  Then $X$ is locally closed.
\end{prop}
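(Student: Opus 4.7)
The plan is to pull back the local closedness of the isomorphism class $E_{\mathfrak{g}}\subset F_{\R^n}$ (established in Lemma~\ref{lem:space of forms}) to $M$ via a continuously varying family of structure constants. Concretely, I would encode $\mathfrak{nil}(M,q)$ for $q$ in a neighborhood of a given point as a point of the affine variety $F_{\R^n}$, show the encoding map is continuous, and then express $X$ locally as the preimage of $E_{\mathfrak{g}}$.

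\textbf{Step 1: Smoothly varying family of brackets.} Fix $p\in M$. On a neighborhood $U$ of $p$ I repeat the opening construction in the proof of Lemma~\ref{lem:isomorphic frames}: choose an adapted frame $(W_1,\dots,W_n)$ on $U$, use Jean's smoothly varying exponential privileged coordinates $\phi_q$ (from \cite[2.2.2]{jeancontrol}) at each $q\in U$, form the nilpotent approximations $\hat W^q_i:=\hat W^{\phi_q,q}_i$, and transport the bracket to $\R^n$ using the linear isomorphism $\widehat{\mathbb{W}}^q\from \R^n\to \mathfrak{g}_q$ to obtain
$$\Psi_q(v,w):=(\widehat{\mathbb{W}}^q)^{-1}\bigl[\widehat{\mathbb{W}}^qv,\,\widehat{\mathbb{W}}^qw\bigr]_q\in F_{\R^n}.$$
By the canonical isomorphism $\iota_q\from \mathfrak{g}_q\to \mathfrak{nil}(M,q)$ discussed in Section~\ref{sec:privileged coords}, $(\R^n,\Psi_q)\cong \mathfrak{nil}(M,q)$ as Lie algebras. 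Since $\phi_q$, and hence the $\hat W^q_i$, depend smoothly on $q$, the structure constants of $\Psi_q$ (its coordinates in $F_{\R^n}$) depend smoothly on $q$; in particular the map $\Psi\from U\to F_{\R^n}$, $q\mapsto \Psi_q$, is continuous.

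\textbf{Step 2: Identify $X\cap U$ as a preimage.} Because $(\R^n,\Psi_q)\cong \mathfrak{nil}(M,q)$, we have
$$X\cap U \;=\; \{q\in U\mid \Psi_q\in E_{\mathfrak g}\} \;=\; \Psi^{-1}(E_{\mathfrak g}).$$
From the proof of Lemma~\ref{lem:space of forms}, $E_{\mathfrak g}$ is the image of the regular orbit map $A\mapsto A_*\Psi_0$ on the affine variety $\GL(V)$, and is therefore locally closed in $F_{\R^n}$ in the Zariski topology, hence a fortiori in the Euclidean topology. Thus, assuming $p\in X$, there is a Euclidean neighborhood $W\subset F_{\R^n}$ of $\Psi_p$ such that $E_{\mathfrak g}\cap W$ is closed in $W$.

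\textbf{Step 3: Conclusion.} Set $V:=U\cap \Psi^{-1}(W)$; this is an open neighborhood of $p$ in $M$, and
$$X\cap V \;=\; \Psi^{-1}(E_{\mathfrak g}\cap W)\cap V,$$
which is closed in $V$ by continuity of $\Psi$. As $p\in X$ was arbitrary, $X$ is locally closed.

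There is no real obstacle here: the technical work has already been carried out in Lemma~\ref{lem:space of forms} and in the first paragraph of the proof of Lemma~\ref{lem:isomorphic frames}. The only thing to verify carefully is that the construction of $\Psi_q$ genuinely produces a continuous map $U\to F_{\R^n}$, but this is exactly the smoothness of Jean's privileged coordinates and of the associated nilpotent approximations.
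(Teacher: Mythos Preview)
Your proposal is correct and follows essentially the same approach as the paper: build the smooth map $q\mapsto \Psi_q\in F_{\R^n}$ from the construction in Lemma~\ref{lem:isomorphic frames}, identify $X\cap U=\Psi^{-1}(E_{\mathfrak g})$, and pull back the local closedness of $E_{\mathfrak g}$ from Lemma~\ref{lem:space of forms}. The paper's proof is a terse two-sentence version of exactly this argument.
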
 
\begin{proof} 
  Let $p\in X$.  As in Lemma~\ref{lem:isomorphic frames}, let $U$ be a neighborhood of $p$ equipped with an adapted frame $(W_1,\dots, W_n)$ and a family of privileged coordinate systems $\phi_q$ at $q$ that varies smoothly with $q$.  This induces a map $\psi\from U\to F_{\R^n}$ such that $X\cap U=\psi^{-1}(E_{\mathfrak{g}})$, and the preimage of a locally closed set is locally closed.  
\end{proof}
In particular, if $X$ is dense in $M$, then for all $p\in X$, there is a neighborhood $V$ of $p$ such that $X\cap V$ is relatively closed in $V$ and thus $V\subset X$; i.e., $X$ is an open subset of $M$.  Note that $X$ need not be all of $M$.  See Section~\ref{sec:Example2} for an example where $M$ is equiregular and $X$ is almost all of $M$ but $X\ne M$.  

\section{Bilipschitz maps with images of positive measure}\label{sec:bilip maps}
To prove Theorem~\ref{thm:Teorema_uno}, we will first construct a family of bilipschitz maps from a subset of $G$ to $M$ whose images have positive measure.  In Section~\ref{sec:proof of main}, we will cover almost all of $M$ by countably many such images.

We will show the following proposition.
\begin{prop}\label{prop:bilipschitz images}
  Let $G$ be a Carnot group with Lie algebra $\mathfrak{g}$.  
  Let $(M,\Delta)$ be a sub-Riemannian manifold such that $\mathfrak{g}\cong\mathfrak{nil}(M,p)$ for every point $p\in M$.  For every $p\in M$ there is a subset $K\subset G$ and a bilipschitz embedding \(H\from K\to M\) such that $H(K)$ has positive density at $p$.
\end{prop}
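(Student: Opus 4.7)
The plan is to construct a positive-measure Cantor set $K\subset G$ from a Christ cube decomposition of $G$ and to define $H\from K\to M$ hierarchically through cube centers, verifying bilipschitzness via Lemma~\ref{lem:transfer lemma}.

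First, using Lemma~\ref{lem:isomorphic frames}, I would restrict to a neighborhood $U$ of $p$ equipped with a smooth frame $\mathbb{X}$ on $M$ and a left-invariant horizontal frame $\mathbb{Y}$ on $G$ to which Lemma~\ref{lem:transfer lemma} applies with constants $C$ and $L$. Fix a Christ cube system on $G$ with cube diameters comparable to $r_k:=2^{-k}R_0$ for a small $R_0$ and a root cube $Q_0$ whose center $o$ is well inside $Q_0$ at every subsequent level (so that $o\in K$ in the construction below). Construct $K:=\bigcap_k K_k$ by, at each level $k\ge 1$, removing from every level-$k$ sub-cube of $K_{k-1}$ a $\delta_k r_k$-neighborhood of its boundary, where $\delta_k:=A\,r_k^{1/s}$ for a large constant $A>0$ to be chosen. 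Using the Christ boundary-thinness bound $|B(\partial Q,\epsilon r_k)\cap Q|\le C'\epsilon^\eta|Q|$ for some $\eta>0$ together with the geometric summability $\delta_k^\eta\sim 2^{-k\eta/s}$, $K$ has positive $Q$-measure and positive lower density at $o$. Moreover, if $x,y\in K$ share the smallest common ancestor cube $Q^*$ at level $k^*$, they lie in distinct level-$(k^*+1)$ sub-cubes and
\[
d_G(x,y)\ge 2\delta_{k^*+1}r_{k^*+1}\gtrsim A\,r_{k^*}^{1+1/s}.
\]

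Second, I would define $H$ hierarchically. Set $H(o):=p$ and, for each cube $Q$ of level $k\ge 1$ with parent $Q'$, let $H(c_Q)$ be the endpoint of the horizontal curve in $M$ starting at $H(c_{Q'})$ with control a $G$-geodesic from $c_{Q'}$ to $c_Q$. Since $d_G(c_{Q_k(x)},c_{Q_{k+1}(x)})\le r_k$ is summable, $\{H(c_{Q_k(x)})\}_k$ is Cauchy in $M$, and I set $H(x):=\lim_k H(c_{Q_k(x)})$ for $x\in K$. To verify bilipschitzness, fix $x,y\in K$ with common ancestor $Q^*$ at level $k^*$ and apply Lemma~\ref{lem:transfer lemma} with base point $q=H(c_{Q^*})$ to the finite hierarchical controls from $c_{Q^*}$ to $c_{Q_{k'}(x)}$ and $c_{Q_{k'}(y)}$, each of $L^1(L^2)$-norm at most a constant times $r_{k^*}$. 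By left-invariance of $\mathbb{Y}$, the corresponding $G$-curves started from $\mathbf{0}$ end at the left translates $c_{Q^*}^{-1}c_{Q_{k'}(x)}$ and $c_{Q^*}^{-1}c_{Q_{k'}(y)}$, which have the same mutual distance as $c_{Q_{k'}(x)}$ and $c_{Q_{k'}(y)}$. Letting $k'\to\infty$ gives
\[
\bigl|d_M(H(x),H(y))-d_G(x,y)\bigr|\lesssim r_{k^*}^{1+1/s}.
\]
Choosing $A$ large enough so that this is at most $\tfrac12 d_G(x,y)$ makes $H$ bilipschitz with constant $2$. Since $K$ has positive lower density at $o$ and $H(o)=p$, the image $H(K)$ has positive density at $p$ by bilipschitzness.

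The main obstacle is calibrating the shell widths $\delta_k$: they must be large enough that the separation between points of $K$ beats the $(\|u_1\|+\|u_2\|)^{1+1/s}$ additive error of Lemma~\ref{lem:transfer lemma}, yet small enough that $\sum_k\delta_k^\eta<\infty$ keeps $|K|>0$. The scaling $\delta_k\sim r_k^{1/s}$ sits exactly at this critical balance, and the positivity of the Christ thinness exponent $\eta$ is what makes the balance achievable for any Carnot group $G$.
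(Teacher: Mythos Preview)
Your approach is essentially the same as the paper's: build a positive-measure Cantor set $K\subset G$ by peeling thin shells off a Christ cube system, define $H$ hierarchically by transferring geodesic controls between successive cube centers into $M$, and verify the bilipschitz bound via Lemma~\ref{lem:transfer lemma}. Two points deserve comment.

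First, your shell exponent $\delta_k\sim r_k^{1/s}$ is the critical one, so the separation $A\,r_{k^*}^{1+1/s}$ and the transfer error $C\,r_{k^*}^{1+1/s}$ have the same power and you must play $A$ against $C$, then shrink $R_0$ to recover $|K|>0$ despite large $A$. This works, but the paper instead takes shells of relative width $\tau\,2^{-k/(2s)}$ (exponent $1/(2s)$), so the separation scales like $r_{k^*}^{1+1/(2s)}$ and dominates the $r_{k^*}^{1+1/s}$ error automatically at small scales; one then only needs $r$ small (a single global rescaling) to handle the top scale. Either choice is fine.

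Second, there is a genuine (if easily repaired) gap in how you arrange $H^{-1}(p)$ to be a density point of $K$. You assert that the root center $o$ can be chosen ``well inside $Q_0$ at every subsequent level'' so that $o\in K$ and $K$ has positive lower density there; neither of these is automatic for a generic Christ cube system, since $o$ may land arbitrarily close to the boundary of some descendant cube and be removed, and even if $o\in K$ the lower-density claim needs an argument. The paper avoids this entirely: it first builds $K$, picks any Lebesgue density point $k_0\in K$ (one exists since $|K|>0$), and then runs the concatenated control curve from $k_0$ \emph{backward} in $M$, ending at $p$, to locate the image $q_0$ of the root. This forces $H(k_0)=p$ without any assumption on how $k_0$ sits inside the cubes. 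You can adopt the same trick with no change to the rest of your argument.
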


Let $\mathbb{Y}$ be a left-invariant frame for $V_1(\mathfrak{g})$.  By Lemma~\ref{lem:isomorphic frames}, there is a local frame $\mathbb{X}$ for $\Delta$ defined on a neighborhood $U$ of $p$ that satisfies the hypothesis of Lemma~\ref{lem:transfer lemma}.  We equip $G$ with the metric $d_{\mathbb{Y}}$ and equip $U$ with the metric $d_{\mathbb{X}}$.  There is no loss of generality here because any two Carnot metrics on $G$ are bilipschitz equivalent and, after possibly passing to a smaller neighborhood, $d_{\mathbb{X}}$ is bilipschitz equivalent to the sub-Riemannian metric on $U$.

We divide the proof of the novel direction of Theorem~\ref{thm:Teorema_uno} into three parts.  We will first construct a Cantor set $K$ by using a set of Christ cubes for $G$, then construct the map $H$ that embeds $K$ into $M$.  This proves Proposition~\ref{prop:bilipschitz images}.  Finally, we will use the maps produced by Proposition~\ref{prop:bilipschitz images} to show that $M$ is  countably $G$-rectifiable.

\subsection{Constructing $K$}
\label{sec:org2c7abaa}

If $S_i$ is a collection of sets, we denote the disjoint union of the $S_i$'s by
$$\sqcup_i S_i=\{(s,i)\mid i\in \Z, s\in S_i\}.$$
We will often refer to elements of a disjoint union and elements of the constituent sets interchangeably.  

Let $X$ be a metric space and let $\mu$ be a measure on $X$ that is Ahlfors regular.  A \emph{cubical patchwork} or \emph{set of Christ cubes} for $X$ is a collection of nested partitions of $X$, analogous to the tilings of $\R^n$ by dyadic cubes.  That is, for each $i$, there is a partition $\Delta_i$ of $X$ (a set of subsets of $X$ that are pairwise disjoint and whose union is all of $X$) that satisfies the following properties.  There are $\sigma\in (0,1)$,  $a_0>0$, $\eta>0$, and $0<C_1<C_2<\infty$ such that:
\begin{enumerate}[1.]
\item 
  If $Q\in \Delta_k$ and $Q'\in \Delta_l$, with $k\le l$, then either $Q'\subset Q$ or $Q\cap Q'=\emptyset$.  
\item Every $Q\in \Delta_k$ has a \emph{parent} $P(Q)\in \Delta_{k-1}$ such that $Q\subset P(Q)$.  
\item Every $Q\in \Delta_k$ contains a ball of radius $C_1 \sigma^{k}$.
\item $\diam Q\le C_2\sigma^{k}$, for every $Q\in \Delta_k$.
\item  
  For all $t>0$ and all $Q\in \Delta_k$, let
  $$\partial_tQ:=\{x\in Q\mid d(x,X\setminus Q)<t\sigma^k\}\cup  \{x\in X\setminus Q\mid d(x,Q)<t\sigma^k\}.$$
  Then for any $0<t\le 1$, 
  \begin{equation}\label{eq:boundary size}
    \mu(\partial_tQ) \le a_0 t^\eta \mu(Q).
  \end{equation}
\end{enumerate}
We call the elements of $\Delta_i$ \emph{cubes}, and we let $\Delta:=\sqcup_i \Delta_i$ be the disjoint union of the $\Delta_i$'s.  Every Ahlfors regular metric space admits a cubical patchwork \cite{MR1009120, ChristTb} for each $\sigma\in (0,1)$, and if $X$ is Ahlfors $d$--regular, then $\mu(Q)\approx (\sigma^k)^d$ for all $Q\in \Delta_k$.  

Let $\Delta=\sqcup_{i\in\Z} \Delta_i$ be a cubical patchwork for $G$ with $\sigma=\frac{1}{2}$.  For any $Q\in \Delta$, let
$$\Delta_i(Q):=\{R\in \Delta_i\mid R\subset Q\}$$
and let $\Delta(Q):=\sqcup_{i\in\Z} \Delta_i(Q)$.  

We use $\Delta$ to construct a Cantor set in $G$.  Let $\tau>0$ be a small number to be determined later and let $Q_0\in \Delta_0$.  Let \(K_0=Q_0\), and for \(k\ge 0\), let
\begin{equation}\label{eq:construct K}
  K_{k+1}=K_k \setminus \bigcup_{Q\in \Delta_k(Q_0)} \partial_{\tau 2^{-\frac{k}{2s}}}Q.
\end{equation}
Note that $\partial_{t}Q$ is open for any $t$ and $Q$ and that $K_1=Q_0\setminus \partial_{\tau}Q_0$ is closed, so $K_k$ is closed for all $k\ge 1$.  Let \(K:=\bigcap_i K_i\).  This is a compact, totally disconnected set.

\begin{lemma}
  When $\tau>0$ is sufficiently small, then $\mu(K)>0$.  
\end{lemma}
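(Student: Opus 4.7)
The plan is to estimate the measure removed from $Q_0$ at each step of the construction and show that the total amount removed is strictly less than $\mu(Q_0)$ when $\tau$ is small.

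First, I would fix $k \ge 0$ and use property (5) of the cubical patchwork with $t = \tau 2^{-k/(2s)}$ (which is at most $\tau \le 1$ once $\tau$ is small enough) to get, for every $Q \in \Delta_k(Q_0)$,
\[
\mu\bigl(\partial_{\tau 2^{-k/(2s)}} Q\bigr) \le a_0 \bigl(\tau 2^{-k/(2s)}\bigr)^\eta \mu(Q) = a_0 \tau^\eta\, 2^{-k\eta/(2s)}\, \mu(Q).
\]
Since the cubes $\{Q \in \Delta_k(Q_0)\}$ form a partition of $Q_0$, summing over them and using subadditivity gives
\[
\mu\Bigl(\bigcup_{Q \in \Delta_k(Q_0)} \partial_{\tau 2^{-k/(2s)}} Q\Bigr) \le a_0 \tau^\eta\, 2^{-k\eta/(2s)}\, \mu(Q_0).
\]

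Next, I would observe that $Q_0 \setminus K = \bigcup_{k \ge 0} (K_k \setminus K_{k+1})$, so by the definition \eqref{eq:construct K} and the previous bound,
\[
\mu(Q_0 \setminus K) \le \sum_{k=0}^\infty a_0 \tau^\eta\, 2^{-k\eta/(2s)}\, \mu(Q_0) = \frac{a_0 \tau^\eta}{1 - 2^{-\eta/(2s)}}\, \mu(Q_0).
\]
The geometric series converges because $\eta/(2s) > 0$. Choosing $\tau$ small enough that the constant $a_0 \tau^\eta / (1 - 2^{-\eta/(2s)}) < 1$ forces $\mu(K) \ge \mu(Q_0) - \mu(Q_0 \setminus K) > 0$, as required.

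This is essentially a Cantor-set construction in which the $k$th removal is so thin that the total measure removed is summable. There is no real obstacle: the key input is property (5), and the exponent $1/(2s)$ in the definition of $K_k$ is harmless because any positive power of $2$ yields a convergent geometric series. The subtler question of whether $\tau$ needs to depend on later quantities (e.g.\ in the construction of the bilipschitz map $H$) would be addressed later; for this lemma alone, any $\tau$ satisfying $a_0 \tau^\eta < 1 - 2^{-\eta/(2s)}$ suffices.
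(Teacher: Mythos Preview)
Your proof is correct and follows essentially the same argument as the paper: bound the measure of each $\partial_{\tau 2^{-k/(2s)}}Q$ via property~(5), sum over the partition $\Delta_k(Q_0)$ to control the measure removed at step $k$, and then sum the resulting geometric series in $k$. The paper phrases the conclusion as $\lim_{\tau\to 0}\mu(K)=\mu(Q_0)$, but your explicit threshold $a_0\tau^\eta<1-2^{-\eta/(2s)}$ is equivalent and equally valid.
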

\begin{proof}
  For any $k\ge 0$,
  \begin{align*}
    \mu(K_k)-\mu(K_{k+1})
    &\le \mu\left(\bigcup_{Q\in \Delta_k(Q_0)} \partial_{\tau 2^{-\frac{k}{2s}}}Q\right)\\
    &\stackrel{\eqref{eq:boundary size}}{\le} \sum_{Q\in \Delta_k(Q_0)} a_0\cdot (\tau 2^{-\frac{k}{2s}})^\eta \mu(Q).
  \end{align*}
  Since $\Delta_k(Q_0)$ is a partition of $Q_0$, we have
  $$\mu(K_k)-\mu(K_{k+1})\le a_0 \tau^\eta 2^{-\frac{k \eta}{2s}} \mu(Q_0)$$
  and
  $$\mu(K_0)-\mu(K_k)\le \sum_{i=0}^{k-1} a_0 \tau^\eta 2^{-\frac{k \eta}{2s}} \mu(Q_0) \le a_0 \tau^\eta (1-2^{-\frac{\eta}{2s}})^{-1}\mu(Q_0).$$
  As $\tau$ goes to zero, this difference goes to zero, so $\lim_{\tau\to 0} \mu(K) = \mu(K_0)=\mu(Q_0)>0$.
\end{proof}

\subsection{Constructing $H$}
We construct $H$ by constructing a metric tree $\cT$ and a map $A\from \cT\to G$ that sends the ends of $\cT$ to the points of $K$ bijectively.  Each point $q\in K$ then corresponds to a curve $\alpha_q$ in $\cT$, and there is a map $F\from \cT\to M$ so that the controls of $F\circ \alpha_q$ with respect to $\mathbb{X}$ are a rescaling of the controls of $A\circ \alpha_q$ with respect to $\mathbb{Y}$.  We then use Lemma~\ref{lem:transfer lemma} to show that $H=F\circ A^{-1}$ satisfies the desired properties.  

Let 
$$\Lambda:=\{Q\in \Delta(Q_0)\mid Q\cap K\ne\emptyset\}$$
and let $\cT$ be the rooted tree with one vertex $v_Q$ for every cube $Q\in\Lambda$.  We let $v_0=v_{Q_0}$ be the root of $\cT$, and for each $Q\in \Delta(Q_0)$ with $Q\ne Q_0$, we connect $Q$ to its parent $P(Q)$.  Let $V(\cT)$ be the vertex set of $\cT$ and let $V_i(\cT)\subset V(\cT)$ be the $i$th generation of the tree; i.e., the set of vertices corresponding to elements of $\Delta_i$.  For every vertex $v\in V(\cT)$, we denote the corresponding cube by $[v]\in \Lambda$.

We equip $\cT$ with a path metric so that the edges between $V_i(\cT)$ and $V_{i-1}(\cT)$ have length $2^{-i}$.  Then \(d(v_0,v)=1-2^i\) for every \(v\in V_i(\cT)\).  Let \(\overline{\cT}\) be the metric completion of \(\cT\).  This completion consists of the union of \(\cT\) with a Cantor set, which we denote by \(J\).  

Let $\rho_i\from J\to V_i(\cT)$ be the map that sends $x\in J$ to the $i$th generation vertex that is closest to $x$.  For every $x\in J$, the path starting at $\rho_0(x)=v_0$ that passes through $\rho_1(x), \rho_2(x)$, and so on is a geodesic of length $1$ connecting $v_0$ to $x$.  For each $i$, we have $[\rho_{i+1}(x)]\subset [\rho_i(x)]$.  

Next, we define a Lipschitz map \(A\from \cT\to G\).  For each $Q$, we send $v_Q$ to a point $A(v_Q)\in Q$ and send the edge from $Q$ to $P(Q)$ to a geodesic in $G$.  For every $i>0$ and $Q\in V_i(\cT)$, we have $A(v_Q)\in Q\subset P(Q)$, so
$$d(A(v_Q),A(v_{P(Q)}))\le \diam P(Q) \le  C_2 2^{-i} = 2C_2 d(v_Q,v_{P(Q)}).$$
Thus $A$ is Lipschitz.  It extends to a map \(\overline{A}\from \overline{\cT}\to G\), and for every $x\in J$,
\begin{equation}\label{eq:bartau descending chain}
  \bigcap_{i\in \N} [\rho_i(x)]=\{\overline{A}(x)\}.
\end{equation}
The intersections $K\cap [\rho_i(x)]$ are all nonempty, so since $K$ is closed, $\overline{A}(x)\in K$.  

For every $x,y \in J$, let 
\begin{equation}\label{eq:define i}
  i(x,y):=\sup\{i\in \Z \mid \rho_i(x)=\rho_i(y)\}
\end{equation}
and let $a(x,y)=\rho_{i(x,y)}(x)\in V(\cT)$, so that $[a(x,y)]$ is the minimal cube containing both $\overline{A}(x)$ and $\overline{A}(y)$.  Then 
\begin{equation}\label{eq:i is log}
  d(x,y)=d(x,a(x,y))+d(a(x,y),y)=2^{-i(x,y)+1}.
\end{equation}
This is an ultrametric on $J$.  

\begin{lemma}\label{lem:barA biholder}
  The restriction $\overline{A}|_J$ is a bijection from $J$ to $K$ so that for all $x,y\in J$, 
  $$\frac{1}{8} \tau d(x,y)^{1+\frac{1}{2s}}\le d(\overline{A}(x),\overline{A}(y)) \le 2C_2 d(x,y).$$
\end{lemma}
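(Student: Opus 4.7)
The plan is to treat the two inequalities separately and then obtain the bijection as a consequence. The upper bound is the easy direction and follows from the Lipschitz estimate that has already been established for $A$ on $\cT$. Indeed, $d(A(v_Q),A(v_{P(Q)}))\le 2C_2 d(v_Q,v_{P(Q)})$ was shown just before the statement, so $A$ is $2C_2$-Lipschitz on $\cT$. Since $\overline{\cT}$ is the metric completion of $\cT$ and $\overline{A}$ is the continuous extension, $\overline{A}$ is $2C_2$-Lipschitz on $\overline{\cT}$, which gives the upper bound.

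The lower bound is the main point. Given $x\ne y$ in $J$, set $k=i(x,y)$ as in \eqref{eq:define i}, so by \eqref{eq:i is log} we have $d(x,y)=2^{-k+1}$. The cubes $Q_x:=[\rho_{k+1}(x)]$ and $Q_y:=[\rho_{k+1}(y)]$ both lie in $\Delta_{k+1}(Q_0)$, are distinct, and by \eqref{eq:bartau descending chain} contain $\overline{A}(x)$ and $\overline{A}(y)$ respectively. By construction of $K_{k+2}\supset K$ in \eqref{eq:construct K}, the point $\overline{A}(x)\in Q_x\cap K$ does not lie in $\partial_{\tau 2^{-(k+1)/(2s)}}Q_x$, so
\[
d(\overline{A}(x),G\setminus Q_x)\ge \tau 2^{-(k+1)/(2s)}\cdot 2^{-(k+1)}=\tau 2^{-(k+1)(1+1/(2s))}.
\]
Since $\overline{A}(y)\in Q_y\subset G\setminus Q_x$, this gives $d(\overline{A}(x),\overline{A}(y))\ge \tau 2^{-(k+1)(1+1/(2s))}$. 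Substituting $2^{-(k+1)}=d(x,y)/4$ and using $1+1/(2s)\le 3/2$ for $s\ge 1$, one has $(1/4)^{1+1/(2s)}\ge (1/4)^{3/2}=1/8$, which yields $d(\overline{A}(x),\overline{A}(y))\ge \tfrac{1}{8}\tau\, d(x,y)^{1+1/(2s)}$.

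For the bijection: injectivity is immediate from the lower bound, since $x\ne y$ in $J$ forces $i(x,y)$ finite and hence $\overline{A}(x)\ne \overline{A}(y)$. For surjectivity onto $K$, given $q\in K$, for each $i\ge 0$ there is a unique $Q_i\in \Delta_i(Q_0)$ containing $q$, and $q\in K$ implies $Q_i\cap K\ne\emptyset$, so $Q_i\in \Lambda$ and $v_{Q_i}\in V_i(\cT)$. The sequence $(v_{Q_i})$ forms a geodesic ray in $\cT$ of total length $\sum_i 2^{-(i+1)}=1$, and therefore converges to a point $x\in J$ with $\rho_i(x)=v_{Q_i}$. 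By \eqref{eq:bartau descending chain} and the fact that $q\in \bigcap_i Q_i$, we have $\overline{A}(x)=q$.

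The main potential obstacle is a small bookkeeping issue, namely making sure the constant $\tfrac{1}{8}$ really works uniformly in $s$; this is what pins down the exponent $1/(2s)$ (rather than $1/s$) in the construction \eqref{eq:construct K} and the numerical inequality $(1/4)^{1+1/(2s)}\ge 1/8$ for $s\ge 1$. Otherwise each step reduces to an application of the cube properties and the defining relations \eqref{eq:define i}, \eqref{eq:i is log}, and \eqref{eq:bartau descending chain}.
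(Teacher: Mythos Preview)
Your proof is correct and follows essentially the same approach as the paper's own proof: both argue the upper bound from the $2C_2$-Lipschitz property already established for $A$, both obtain the lower bound by passing to level $i(x,y)+1$ and using that points of $K$ avoid the boundary layers $\partial_{\tau 2^{-(k+1)/(2s)}}Q$ removed in \eqref{eq:construct K}, and both handle surjectivity by tracing the descending chain of cubes containing a given $q\in K$. Your version is slightly more explicit about why the constant $\tfrac{1}{8}$ works (via $(1/4)^{1+1/(2s)}\ge 1/8$ for $s\ge 1$), which the paper leaves implicit.
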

\begin{proof}
  First, we show that $\overline{A}(J)=K$.  If $k\in K$, then for each $i\ge 0$, there is a cube $R_i\in \Lambda_i$ such that $k\in R_i$.  Since $R_i$ and $R_{i+1}$ intersect, $R_{i+1}\subset R_i$, so the vertices $v_{R_0},v_{R_1},\dots$ form a path starting at the root of $\cT$; this path converges to a point $x\in J$, and 
  $$\overline{A}(x)=\bigcap_i R_i=\{k\}.$$
  
  We previously showed that $A$ is $2C_2$--Lipschitz, so $d(\overline{A}(x),\overline{A}(y))\le 2C_2 d(x,y)$ for all $x,y\in J$.  Suppose that $x_1,x_2\in J$ and $x_1\ne x_2$, so that $i(x_1,x_2)<\infty$.  Let $i=i(x_1,x_2)$ and let $Q_j=[\rho_{i+1}(x_j)]\in \Delta_{i+1}$ for $j=1,2$.  Then $P(Q_1)=P(Q_2)=[\rho_i(x_j)]$ for $j=1,2$, but $Q_1$ and $Q_2$ are disjoint.  Since $\overline{A}(x_j)\in K$, equation \eqref{eq:construct K} implies that $\overline{A}(x_1)\not \in \partial_{\tau 2^{-\frac{i+1}{2s}}}Q_1$, so   
  $$d(\overline{A}(x_1), G\setminus Q_1)\ge \tau 2^{-(i+1)(1+\frac{1}{2s})}.$$
  Thus
  $$d(\overline{A}(x_1),\overline{A}(x_2))\ge \tau 2^{-(i+1)(1+\frac{1}{2s})}.$$
  Since $d(x_1,x_2)=2^{-i+1}$,
  $$d(\overline{A}(x_1),\overline{A}(x_2))\ge \frac{1}{8} \tau d(x,y)^{1+\frac{1}{2s}}.$$
\end{proof}

Let $p\in M$.  Let $B$, $C$, and $L$ be as in Lemma~\ref{lem:transfer lemma}, and suppose that $L$ is small enough that $B_{L}(p)\subset B$.  Let $r>0$ be small enough that 
\begin{equation}\label{eq:choice r}
2r<L\qquad\text{ and }\qquad
 C r^{\frac{1}{s}} \le \frac{\tau}{80 C_2}
\end{equation}
and let $E\from \overline{\cT} \to G$ be the map 
\begin{equation}\label{eq:define E}
  E:=\delta_{\frac{r}{2C_2}}\circ \overline{A}.
\end{equation}
Then $\Lip(E)\le r (2C_2)^{-1}\Lip(A)\le r$.  

For every $x\in J$, there is a unique unit-speed geodesic $\lambda_x\from [0,1]\to \overline{\cT}$ connecting $v_0$ to $x$.  The composition $\alpha_x:=E\circ \lambda_x$ is a horizontal curve in $G$ of length at most $r$.  Let $k_0\in K$ be a density point of $K$ and let $j_0\in J$ be such that $\overline{A}(j_0)=k_0$.  Let $\gamma\from [0,1]\to M$ be the unique horizontal curve with the same controls as $\alpha_{j_0}$ such that $\gamma(1)=p$, and let $q_0:=\gamma(0)$.  Note that $d(p,q_0)\le r$.

For each $x\in J$, let $\hat{\alpha}_x\from[0,1]\to M$ be the unique horizontal curve with the same controls as $\alpha_x$ such that $\hat{\alpha}_x(0)=q_0$.  Then $\hat{\alpha}_{j_0}=\gamma$.  

Let $F\from \overline{\cT} \to M$ be the map such that $F(\lambda_x(t))=\hat{\alpha}_x(t)$ for all $x\in J$ and $t\in [0,1]$.  This is well defined, because if $x,y\in J$ and $\lambda_x(t)=\lambda_y(t)$, then $\lambda_x$ and $\lambda_y$ agree on the interval $[0,t]$, so $\hat{\alpha}_x$ and $\hat{\alpha}_y$ agree on the same interval.  By construction, $\hat{\alpha}_{j_0}=\gamma$, so
$$F(j_0)=F(\lambda_{j_0}(1))=\hat{\alpha}_{j_0}(1)=\gamma(1)=p.$$

We use $F$ to prove Proposition~\ref{prop:bilipschitz images}.

\begin{proof}[{Proof of Proposition~\ref{prop:bilipschitz images}}]
  Let $H\from K\to M$ be the map $H:=F\circ (\overline{A}|_J)^{-1}$.  We claim that $H$ is a biLipschitz embedding.

  Let $E:=\delta_{\frac{r}{2C_2}}\circ \overline{A}$ as in the definition of $A$.  Then $H=F\circ (E|_J)^{-1} \circ \delta_{\frac{2C_2}{r}}$, so $H$ is biLipschitz if and only if $F\circ (E|_J)^{-1}$ is biLipschitz.  Since $E$ is injective on $J$, it suffices to show that for all $x,y\in J$, we have
  \begin{equation}\label{eq:compare E F}
    \frac{1}{2}d_G(E(x),E(y)) \le d_M(F(x),F(y)) \le 2d_G(E(x),E(y)).
  \end{equation}
  
  Let $i=i(x,y)$ be as in \eqref{eq:define i}.  By \eqref{eq:i is log},  $d_\cT(x,y)=2^{-i+1}$.  By Lemma~\ref{lem:barA biholder}, 
  \begin{equation}\label{eq:x y separation}
    d_G(E(x),E(y))=\frac{r}{2C_2}d_G(\overline{A}(x),\overline{A}(y)) \ge \frac{\tau r}{16 C_2} d_\cT(x,y)^{1+\frac{1}{2s}}.
  \end{equation}

  Let $\alpha,\beta\from [0,1]\to \overline{\cT}$ be the curves $\alpha(t)=\lambda_x(1-2^{-i}+2^{-i}t)$, $\beta(t)=\lambda_y(1-2^{-i}+2^{-i}t)$.  These are geodesics such that $\alpha(0)=\beta(0)=a(x,y)$, $\alpha(1)=x$, and $\beta(1)=y$.  The compositions $E\circ \alpha$ and $E\circ \beta$ are horizontal curves in $G$ of length at most $r2^{-i}$ that both start at $E(a(x,y))$, and $F\circ \alpha$ and $F\circ \beta$ are curves in $M$ with the same controls that both start at $q:=F(a(x,y))$.  Since
  $$d(p,q)\le d(p,q_0)+d(q_0,q)\le r+(1-2^{-i})r \stackrel{\eqref{eq:choice r}}{<}L,$$
  we have $q\in B$, and we can apply Lemma~\ref{lem:transfer lemma}.

  Let $u_\alpha,u_\beta$ be the controls of $E\circ \alpha$ and $E\circ \beta$; we have $$\|u_\alpha(t)\|_1, \|u_\beta(t)\|_1 \le r2^{-i}.$$
  By Lemma~\ref{lem:transfer lemma}, 
  \begin{align*}
    |d_G(E(x),E(y))-d_M(F(x),F(y))| 
    &\le C (\|u_\alpha\|_1+\|u_\beta\|_1)^{1+\frac{1}{s}}\\ 
    &\le C (r d_\cT(x,y))^{1+\frac{1}{s}} \\ 
    &= C r^{\frac{1}{s}} \cdot r \cdot d_\cT(x,y)^{1+\frac{1}{s}} \\ 
    &\stackrel{\eqref{eq:choice r}}{\le} \frac{\tau}{80C_2} \cdot r \cdot 2 d_\cT(x,y)^{1+\frac{1}{2s}} \\ 
    &\stackrel{\eqref{eq:x y separation}}{\le} \frac{1}{2} d_G(E(x),E(y)).
  \end{align*}
  This implies \eqref{eq:compare E F}, so $H$ is biLipschitz.

  Finally, since $H$ is a biLipschitz map from $K\subset G$ to $M$, $k_0$ is a point of density of $K$, and $G$ and $M$ have the same Hausdorff dimension, the image $H(K)$ has positive density (with respect to top-dimensional Hausdorff measure) at $H(k_0)=F(\overline{A}^{-1}(\overline{A}(j_0)))=F(j_0)=p$.
\end{proof}

\section{Proof of Theorem~\ref{thm:Teorema_uno}}\label{sec:proof of main}

We break the proof of Theorem~\ref{thm:Teorema_uno} into two parts.

\subsection{$G$ is the tangent almost everywhere \(\Rightarrow\) $M$ is countably $G$-rectifiable}\label{sec:proof of main1}
First, we prove the reverse direction in Theorem~\ref{thm:Teorema_uno}. 
 It suffices to show that any compact subset of $M$ can be covered by the union of countably many bilipschitz images of subsets of $G$ and a null set.  Let $C\subset M$ be compact.

Let $L=\inf \{\mu(C\setminus \bigcup_{i=1}^\infty X_i)\}$, where the infimum is taken over countable sequences of sets $X_i\subset M$ such that $X_i$ is the bilipschitz image of a subset of $G$ (henceforth, a \emph{bilipschitz image}).  We claim that $L=0$.  First, note that this infimum is achieved by some sequence of sets $X_i$; if $X_{i,j}\subset M$ are bilipschitz images such that for any $j$,
$$\mu(C\setminus \bigcup_{i=1}^\infty X_{i,j})\le L+\frac{1}{j},$$
then $\mu(C\setminus \bigcup_{i=1}^\infty\bigcup_{j=1}^\infty X_{i,j})=L$.

By way of contradiction, suppose that $L>0$.  Let $X_1,X_2,\dots\subset M$ be bilipschitz images and let $S=C\setminus \bigcup_i X_i$ be such that $\mu(S)=L$.  Let $p\in S$ be a point such that $S$ has density $1$ at $p$.  By Proposition~\ref{prop:bilipschitz images}, there is a bilipschitz image $Y$ that has positive density at $p$.  Then
$$\mu(C\setminus (Y\cup \bigcup_i X_i))=\mu(S\setminus Y)<L,$$
which contradicts the minimality of $L$.  Therefore, $L=0$, and there is a sequence of bilipschitz images $X_1,X_2,\dots\subset M$ such that
$$\mu(C\setminus \bigcup_{i=1}^\infty X_{i})=0,$$
as desired.  Hence, one direction of Theorem~\ref{thm:Teorema_uno} is proved.

\subsection{$M$ is countably $G$-rectifiable \(\Rightarrow\) $G$ is the tangent almost everywhere}\label{sec:proof of main2}

The forward direction of Theorem~\ref{thm:Teorema_uno} follows from the work of Pansu. 

Assume $M$ is countably $G$-rectifiable and of Hausdorff dimension $Q$.  Since the measure 
$\mathcal H^Q $ is doubling then almost every point $p$ is a point of density 1 for the image $f(U)$ of a biLipschitz map $f \from U \to M$, with $U \subseteq X$ measurable. Since $G$ and $M$ are doubling we can pass to tangents at $f^{-1}(p)$ and $p$ and the metric tangent of $M$ at $p$ equals the  metric tangent of $F(U)$ at $p$, see \cite{LeDonne6}. 
Moreover,  by Gromov compactness (or using ultralimits) we get some induced   biLipschitz map from $G$ to $\Cone(M,p)$,  which may depends on rescalings of taking tangents. Since the space $\Cone(M,p)$ is a Carnot group isomorphic to $\Nil(M,p)$,   Pansu's version \cite{Pansu} of Rademacher differentiation   implies that $G$ and $\Nil(M,p)$ must be isomorphic as Lie groups. 
\qed

\section{Examples}\label{sec:Example}
\subsection{An example with no positive-measure set with constant tangent}\label{sec:Example1}

We give an example of an equiregular 
sub-Riemannian manifold on which the tangent is not constant on any set of positive measure. We thank Ben Warhurst for discussing this example with us.
\begin{prop}
There exists a 7--dimensional equiregular sub-Riemannian manifold $M$ foliated by 6--dimensional manifolds with the property that if $p$ and $q$ are two points in different leaves, then the tangents $\Nil(M,p)$ and $\Nil(M,q)$
are not isomorphic as Lie groups.
\end{prop}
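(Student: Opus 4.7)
The plan is to realize $M$ as a $7$-dimensional sub-Riemannian manifold whose tangent Lie algebra is controlled by a single smooth function $t\from M\to\R$ with $6$-dimensional level sets, so that $t\mapsto \mathfrak{nil}(M,p)$ traces a non-constant path in the moduli of $7$-dimensional stratified nilpotent Lie algebras.  The first step is to exhibit a smooth $1$-parameter family $\{\mathfrak{g}_t\}_{t\in I}$, $I\subset\R$ an interval, of stratified nilpotent Lie algebras of dimension $7$ with a common growth vector $(n_1,\ldots,n_s)$ and pairwise non-isomorphic as Lie algebras.  By Lemma~\ref{lem:space of forms} the set $E^{\mathrm{s}}_\mathfrak{g}\subset F_{\R^7}$ of brackets realising a given stratified isomorphism class is a smooth submanifold, and the stratified-isomorphism classes are orbits of the grading-preserving subgroup $S\subset\GL_7(\R)$; whenever $\dim E^{\mathrm{s}}_\mathfrak{g}>\dim S\cdot \Psi$ for a generic point $\Psi$, a transverse smooth arc in $E^{\mathrm{s}}_\mathfrak{g}$ meets uncountably many orbits.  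A dimension count shows this inequality is satisfied for several growth vectors already in dimension $7$, so one can produce (and explicitly write down, as suggested by B.~Warhurst) a concrete such family $t\mapsto \mathfrak{g}_t$.

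Next I would realise $\{\mathfrak{g}_t\}$ as the family of nilpotentizations of a single sub-Riemannian structure.  Writing $\mathfrak{g}_t=(\R^7,\Psi_t)$ with $t\mapsto\Psi_t$ smooth, each Carnot group $G_t=\exp(\mathfrak{g}_t)$ is diffeomorphic to $\R^7$, and its left-invariant horizontal fields have polynomial coefficients depending smoothly on $t$.  On $M=I\times\R^6$ with coordinates $(t,y_1,\ldots,y_6)$, I assemble a frame $\mathbb{X}$ for a rank-$d$ distribution $\Delta$ by taking, at each point $(t_0,y_0)$, a frame for the horizontal distribution of $G_{t_0}$ along the slice $\{t_0\}\times\R^6$, augmented by exactly one extra field of the form $\partial_t+\xi_t$ with $\xi_t$ horizontal in $G_t$, tuned so that $\partial_t$ is bracket-generated only at the final step.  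The metric is defined by declaring $\mathbb{X}$ orthonormal.  In privileged coordinates at $p=(t_0,y_0)$ adapted to the product structure and to the stratification of $\mathfrak{g}_{t_0}$, the nilpotent approximations of the slice-horizontal fields reproduce the left-invariant frame of $G_{t_0}$ and the extra field contributes the missing top-stratum direction; by the canonical identification of Section~\ref{sec:privileged coords}, $\mathfrak{nil}(M,p)\cong\mathfrak{g}_{t_0}$.  The desired foliation is $\{t=\mathrm{const}\}$, and by construction any two points on different leaves carry non-isomorphic tangent groups.

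The main obstacle is the calibration of the extra $\partial_t$-field: it must be chosen so that (a) the overall growth vector is constant in $(t,y)$, ensuring that $M$ is equiregular, and (b) parasitic brackets $[\partial_t+\xi_t,\cdot]$ do not perturb the isomorphism class of the nilpotentization on each leaf.  Both conditions can be enforced by arranging that the slice-horizontal fields are, at the base point of the privileged coordinates, constant in $t$ to the relevant order, so that $[\partial_t,\cdot]$ kills the top-weight parts of the slice-frame; this is the delicate point at which an explicit family $\{\mathfrak{g}_t\}$ is needed rather than the abstract existence supplied by Lemma~\ref{lem:space of forms}.  Once such a family and such a tuning are fixed, equiregularity and the computation of $\mathfrak{nil}(M,p)$ are routine consequences of Bella\"iche's description of nilpotent approximations recalled in Section~\ref{sec:privileged coords}.
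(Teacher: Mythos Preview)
Your high-level strategy---find a smooth $1$-parameter family of pairwise non-isomorphic $7$-dimensional stratified nilpotent Lie algebras and realise it as the family of nilpotentizations of a single equiregular structure---is exactly what the paper does.  The paper, however, carries out the realisation step by explicit construction: it writes down a rank-$3$ distribution on $\R^7$ (three concrete polynomial vector fields), computes the nilpotentization at each point, and identifies it via a change of basis with the known family $147E$ with parameter $\xi=2x_1$, then invokes the classification of $7$-dimensional nilpotent Lie algebras to conclude that distinct values of $x_1\in(0,1/4)$ give non-isomorphic tangents.  The foliation is by the hyperplanes $\{x_1=\text{const}\}$.

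Your realisation sketch, by contrast, has a genuine gap.  You propose $M=I\times\R^6$ with a frame consisting of ``slice-horizontal'' fields of $G_{t_0}$ along $\{t_0\}\times\R^6$ together with one extra field $\partial_t+\xi_t$.  But $G_{t_0}$ is $7$-dimensional, so its horizontal frame cannot live tangentially on a $6$-dimensional slice and still bracket-generate a $7$-dimensional algebra; brackets of slice-tangent fields stay slice-tangent.  If instead the extra field $\partial_t+\xi_t$ is declared horizontal (with $\xi_t$ horizontal), then $\partial_t$ itself lies in $\Delta$, the rank of $\Delta$ becomes $\dim V_1(\mathfrak{g}_t)+1$, and the first layer of $\mathfrak{nil}(M,p)$ is strictly larger than $V_1(\mathfrak{g}_t)$; so $\mathfrak{nil}(M,p)\not\cong\mathfrak{g}_t$ and there is no reason the resulting family remains pairwise non-isomorphic.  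You yourself flag the ``calibration'' as the delicate point; in fact this is where the entire content of the proposition lies, and the paper resolves it not by an abstract argument but by exhibiting a specific distribution and computing.  Your dimension-count remark about orbits in $F_{\R^7}$ is a plausible heuristic for the \emph{existence} of a $1$-parameter moduli family, but it does not produce a sub-Riemannian structure realising that family as nilpotentizations; for that you need the explicit example.
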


\begin{proof}
Consider in  $\mathbb{R}^7$ the rank-3 distribution spanned by
\begin{align*}
X_1  &=\partial_1\\
X_2 &= \partial_2 + x_1 \partial_4 - x_1 x_3(-1+x_1)\partial_7\\
X_3 &= \partial_3 + x_2 \partial_5 - x_1 \partial_6 - x_1 x_2 x_1 \partial_7.\\
\end{align*}
This is a bracket-generating equiregular distribution. 

The nilpotentization at a point $(x_1,\dots,x_7)$ is given by a stratified Lie algebra with basis $(E_1,\dots,E_7)$ and non-trivial relations:
\begin{align}
[E_1, E_2]  &= E_4 &[E_2, E_3] &= E_5  &[E_1, E_3] &= -E_6 \nonumber \\
 [E_1, E_5] &= -E_7  &[E_2, E_6] &= 2 x_1 E_7  &[E_3, E_4] &= (1-2 x_1) E_7 \label{symbol0}\\
 [E_1, E_4] & = -2 x_3 E_7  &[E_1, E_6] &= 2 x_2 E_7 \nonumber
\end{align}

We consider the change of variables
\begin{eqnarray*}
e_1 = E_1 + \alpha E_2+ \beta E_3,    &
e_2 = E_2   ,&
e_3 = E_3   , \\
\end{eqnarray*}
with $\alpha= - \frac{2x_2}{1+2x_1}$ and $ \beta=   \frac{ x_3}{1-2x_1} $ so that 
$$2 x_2 + \alpha 2 x_1+\alpha = -2x_3 +\beta+\beta (1-2x_1)= 0.$$

This change of variables proves that the nilpotentization  \eqref{symbol0} is isomorphic to the Lie algebra 147E, denoted by $\mathfrak{g}^\xi$, with parameter
 $\xi=2 x_1$.
For $\xi\in (0 , 1/2)$, such algebras are pairwise non-isomorphic, see Remark~\ref{rem:g:e}. Hence, the distribution above, restricted to the  open set $\{0<x_1<1/4\}$, has constant tangents on the $x_1$-hyperplanes, and on two such hyperplanes the tangent is different.\end{proof}
 
 \begin{remark}\label{rem:g:e}
In general, we know from the classification of nilpotent Lie algebras of dimension 7, see \cite{Gong_Thesis}, that  the Lie algebras  $\mathfrak{g}^\xi$, $\xi\in\R$, from the above proof are such that
 $\mathfrak{g}^\xi$ and $\mathfrak{g}^\eta$ are isomorphic if and only if 
 $$ \eta \in \left\{ \, \xi,\quad  \frac{1}{\xi}, \quad  1-\xi, \quad  \frac{-1+\xi}{\xi}, \quad \frac{-1}{-1+\xi}, \quad  \frac{\xi}{-1+\xi} \, \right\}. $$
Hence, one actually has that  the distribution on $\R^7$ from the proof  has that the tangent is not constant on any set of positive measure. \end{remark}

\subsection{An example with  constant tangent almost everywhere, but not everywhere}\label{sec:Example2}
We give an example of an equiregular sub-Riemannian manifold on which the tangent is constant everywhere, but it is not constant everywhere. We initially observe that a similar example that is not equiregular is given by the Martinet distribution in $\R^3$:
$$ \partial_x,\qquad \partial_y+x^2 \partial_z.$$

As a variation of this example, we shall consider the rank-4 distribution on $\R^5$, with coordinates $x_1, y_1, x_2, y_2, z$, defined by the vector fields
$$X_1:= \partial_{x_1},\qquad Y_1 :=\partial_{y_1}+x_1^2 \partial_z,  \qquad  X_2:= \partial_{x_2},\qquad Y_2 :=\partial_{y_2}+{x_2} \partial_z.$$
The only non-trivial brackets 
 of these four vector fields are
$$[X_1, Y_1] = 2{x_1}  \partial_z,  \qquad  [X_2, Y_2]= \partial_z.$$
This vector fields span the tangent at every point, so the structure is equiregular.

On a full-measure open set the metric tangent is the same Lie group. In fact, if $x_1\neq 0$ then the nilpotentization is the second Heisenberg group, i.e., the basis $X_1, \frac{1}{x_1} Y_1,X_2, Y_2, \partial_z$ form a standard basis of such a group.

However, the nilpotentization is not the same at every point. Indeed, if $x_1=0$ then the nilpotentization is the direct product of $\R^2$ and the first Heisenberg group, i.e., the basis $X_1,   Y_1,X_2, Y_2, \partial_z$ form a standard basis of such a group.

\bibliography{general_bibliography_copy} 
 
\bibliographystyle{amsalpha}

\end{document}